\def\re{\mathbb{R}}
\def\N{\mathbb{N}}
\def\eps{\varepsilon}
\def\pd{\partial}
\def\la{\lambda}
\def\({\left(}
\def\){\right)}
\def\pd{\partial}
\def\BOX{{\setlength{\unitlength}{1pt}\begin{picture}(8,8)
	\put(1,1){\framebox(6,6)}\end{picture}}\ }
\def\qed{\hfill\BOX\vskip1em\par}
\def\intO{\int_{\Omega}}
\def\intdO{\int_{\partial\Omega}}
\def\al{\alpha}
\newtheorem{theorem}{Theorem}[section]
\newtheorem{corollary}[theorem]{Corollary}
\newtheorem{lemma}[theorem]{Lemma}
\newtheorem{proposition}[theorem]{Proposition}
\newtheorem{remark}[theorem]{Remark}
\begin{document}

\title[Sharp Logarithmic Sobolev inequality]{
Sharp Logarithmic Sobolev and related inequalities with monomial weights
}
\author{Filomena Feo}
\author{Futoshi Takahashi}

\date{\today}
\keywords{Weighted Sobolev inequalities, Weighted Logarithmic Sobolev inequality, Weighted Shannon type inequality, uncertain type inequality, monomial weights}
\subjclass{Primary 26D10; Secondary 46E35}

\begin{abstract}
We derive a sharp Logarithmic Sobolev inequality with monomial weights starting from a sharp Sobolev inequality with monomial weights. 
Several related inequalities such as Shannon type and Heisenberg's uncertain type are also derived. 
A characterization of the equality case for the Logarithmic Sobolev inequality is given when the exponents of the monomial weights are all zero or integers. 
Such a proof is new even in the unweighted case.
\end{abstract}

\maketitle

\section{Introduction}

In the recent paper \cite{Cabre-RosOton}, the authors establish an isoperimetric inequality with monomial weights
and derive Sobolev, Morrey, and Trudinger inequalities from such geometric inequality.
More precisely, let $A = (A_1, A_2, \dots, A_n)$ be a nonnegative vector in $\re^n$, \textit{i.e.} $A_1 \ge 0, \dots, A_n \ge 0$, and define
\[
	\re^n_A = \{ x = (x_1, \dots, x_n) \in \re^n \, : \, x_i > 0 \ \text{if} \ A_i > 0 \}
\]
and the monomial weight
\[
	x^A = |x_1|^{A_1} \cdots |x_n|^{A_n} \text{ for } x \in \re^n_A.
\]
For any bounded open set $\Omega$ of $\re^n$ let us denote by  $W^{1,p}_0(\Omega, x^A dx)$ the closure of the space $C_c^1(\re^n)$ with the norm
$\| f \|_{W^{1,p}(\Omega, x^A dx)} = (\int_{\Omega} (|f|^p +|\nabla f|^p) x^A dx)^{1/p}$ for $1 \le p < \infty$. 
The Sobolev inequality with monomial weights proved in \cite{Cabre-RosOton} reads as follows:

\begin{theorem}{\rm (Sharp Sobolev inequality with monomial weights \cite{Cabre-RosOton})}
\label{Theorem:Sobolev}
Let $A = (A_1, A_2, \dots, A_n)$ be a nonnegative vector in $\re^n$, $D=n+A_1+\cdots+A_n, 1 \le p < D$ and $p_* = \frac{Dp}{D-p}$.
Then the inequality
\begin{equation}\label{Sobolev weight}
	\( \int_{\re^n_A} |f|^{p_{*}}\,x^A\, dx \)^{1/p_{*}} \le C_{p,n,A} \( \int_{\re^n_A} |\nabla f|^p \,x^A \,dx \)^{1/p}
\end{equation}
holds true for any $f \in W^{1,p}_0(\re^n, x^A dx)$, where
\begin{align}\label{C1}
	C_{1,n,A}= D^{-1} \( \frac{2^k\Gamma\(1 + \frac{D}{2}\)}{\Gamma\(\frac{A_1+1}{2}\)\Gamma\(\frac{A_2+1}{2}\)\cdots\Gamma\(\frac{A_n+1}{2}\)} \)^{\frac{1}{D}} \quad \text{for} \quad p = 1,
\end{align}
\begin{align} \label{Cp}
	C_{p,n,A}= C_{1,n,A} D^{1-\frac{1}{p}-\frac{1}{D}} \( \frac{p-1}{D-p} \)^{\frac{1}{p^{\prime}}} \( \frac{p^{\prime}\Gamma(D)}{\Gamma\(\frac{D}{p}\)\Gamma\(\frac{D}{p^{\prime}}\)} \)^{\frac{1}{D}} 
\quad \text{for} \quad 1 < p < D.
\end{align}
Here $k = \sharp \{ i \in \{ 1, \dots, n \} \, : \, A_i > 0 \}$ denotes the number of positive entries of the vector $A$, 
$\Gamma(s)$ denotes the Gamma function, and as usual $p^{\prime} = \frac{p}{p-1}$. 
Moreover, the constant $C_{1,n,A}$ is not attained by any function in $W^{1,1}_0(\re^n, x^A dx)$.
On the other hand,
the constant $C_{p,n,A}$ is attained in $W^{1,p}_0(\re^n, x^A dx)$ for $1 < p < D$ by functions of the form
\begin{equation} \label{estr sobolev}
	(a + b|x|^{\frac{p}{p-1}})^{1-D/p},
\end{equation}
where $a$ and $b$ are any positive constants.
\end{theorem}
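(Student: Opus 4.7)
The plan is to follow the two-step strategy of Cabr\'e and Ros-Oton: establish the sharp weighted isoperimetric inequality first (which yields the case $p=1$), then bootstrap to $1 < p < D$ via H\"older's inequality combined with a reduction to a one-dimensional Bliss-type problem that identifies both the sharp constant and the Aubin--Talenti-type extremals \eqref{estr sobolev}.

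For $p=1$, the core task is the weighted isoperimetric inequality
\[
P_A(E) \geq D\, C_{1,n,A}^{-1}\, |E|_A^{(D-1)/D},
\]
for smooth bounded $E \subset \re^n_A$, where $|E|_A = \int_E x^A dx$ and $P_A(E) = \int_{\partial E \cap \re^n_A} x^A\, d\mathcal{H}^{n-1}$, with equality characterized by balls centered at the origin intersected with $\re^n_A$. The ABP (Aleksandrov--Bakelman--Pucci) approach is natural here: one solves the weighted Neumann problem $\operatorname{div}(x^A \nabla u) = b\, x^A$ in $E$ with conormal data $x^A \partial_\nu u = x^A$ on $\partial E$, then bounds below the weighted measure of the image of the lower contact set of $u$ via an AM--GM/Jacobian estimate. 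The precise Gamma-function constant $C_{1,n,A}$ emerges from bookkeeping along the coordinate hyperplanes $\{x_i=0\}$. Passing from isoperimetry to \eqref{Sobolev weight} for $p=1$ is then routine via the weighted coarea formula applied to $|f|$ and a layer-cake representation; non-attainment in $W^{1,1}_0$ follows because extremizing sequences concentrate to characteristic functions of balls.

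For $1 < p < D$, I would apply the $p=1$ inequality to $|f|^\gamma$ with $\gamma = \frac{p(D-1)}{D-p}$ and then use H\"older's inequality with exponents $p$ and $p'$ to separate $\nabla f$ from a power of $f$. This yields \eqref{Sobolev weight} with some explicit constant. To verify that this constant is in fact sharp and to recover the extremal family \eqref{estr sobolev}, I would argue that minimizers of the associated Rayleigh quotient must depend only on $|x|$ (via a weight-compatible symmetrization on $\re^n_A$), reducing the problem to a one-dimensional Bliss inequality on $(0,\infty)$ with measure $r^{D-1}dr$. The explicit Bliss extremals $(a+br^{p'})^{1-D/p}$ then produce the family \eqref{estr sobolev}, and $C_{p,n,A}$ is obtained by substituting these functions back and evaluating the resulting Beta/Gamma integrals.

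The main obstacle is the first step: since $x^A$ is not rotationally invariant unless all the $A_i$ coincide, classical Schwarz symmetrization is unavailable, and standard mass-transport proofs of isoperimetry must be adapted to the weighted ambient space. The ABP method reduces the inequality to a pointwise determinantal estimate, but verifying the equality case and tracking the precise Gamma-function constant requires nontrivial care along the singular loci where the weight degenerates or is unbounded. A smaller but still nonobvious issue in Step 2 is justifying radial symmetry of extremals in the weighted setting, which again demands a rearrangement argument tailored to $x^A$ rather than the usual decreasing rearrangement.
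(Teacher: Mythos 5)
This statement is Theorem \ref{Theorem:Sobolev} of the paper, which the paper does \emph{not} prove: it is imported as a black box from Cabr\'e and Ros-Oton \cite{Cabre-RosOton} and used only as the starting point for the Logarithmic Sobolev inequality. So your proposal can only be compared with the proof in that reference, and in outline it does follow the same route: an ABP argument for the Neumann problem $\mathrm{div}(x^A\nabla u)=b\,x^A$ in $E$, $\partial_\nu u=1$ on $\partial E$, gives the sharp isoperimetric inequality \eqref{Dis Isop}; the case $p=1$ of \eqref{Sobolev weight} follows by coarea and layer cake; and the case $1<p<D$ is reduced to radial functions and to the one-dimensional Bliss problem for the measure $r^{D-1}dr$, which produces \eqref{Cp} and the family \eqref{estr sobolev}.

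The genuine defect is the first half of your Step 2. Applying the $p=1$ inequality to $|f|^{\gamma}$ with $\gamma=\frac{p(D-1)}{D-p}$ and using H\"older gives \eqref{Sobolev weight} with constant $\gamma\,C_{1,n,A}$, and this constant is \emph{not} sharp: already for $A=0$, $n=3$, $p=2$ it equals $4\,C_{1,3,0}$, whereas $C_{2,3,0}=3^{1/6}(16/\pi)^{1/3}C_{1,3,0}\approx 2.07\,C_{1,3,0}$. So the plan ``derive an explicit constant by H\"older, then verify it is sharp'' cannot be completed; that step must be discarded, and the entire $p>1$ case must rest on the symmetrization--Bliss argument. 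There, too, the logic needs adjusting: arguing that ``minimizers of the Rayleigh quotient must be radial'' presupposes that minimizers exist, and full radial symmetry of \emph{all} extremizers is precisely the delicate point which, as the paper remarks right after the theorem, is known only when every $A_i\in\N\cup\{0\}$. The correct formulation is a weighted P\'olya--Szeg\H{o} inequality: the rearrangement $f^*$ built from the isoperimetric inequality (whose optimal sets are dilations of $B_1^A$ in \eqref{B1A}) preserves the weighted $L^{p_*}$ norm and does not increase the weighted $L^p$ norm of the gradient, so the best constant equals the radial best constant, which Bliss computes; attainment then follows by substituting \eqref{estr sobolev} directly. Note finally that the ABP method does not characterize equality in the isoperimetric inequality, but no such characterization is needed for the statement: sharpness of $C_{1,n,A}$ is the computation of the isoperimetric quotient of $B_1^A$, and non-attainment in $W^{1,1}_0$ holds because equality would force $|f|$ to be a multiple of a characteristic function.
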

When $A=(0,...,0)$, Theorem \ref{Theorem:Sobolev} reduces to the classical Sobolev inequality.
Unlike the classical one, the previous inequality is {\it not} invariant under the translation and the rotation of the space when $A \not\equiv (0, \cdots, 0)$, 
but is homogeneous and invariant with respect to the rescaling $f\rightarrow f_\lambda(x)=\lambda^{\frac{D-p}{p}}f(\lambda x)$ for $\lambda>0$. 
Note that it is established only when $A_i=0$ or $A_i\in\mathbb{N}$ for all $i$, that {\it all} extremizers which achieve the equality must be of the form \eqref{estr sobolev}. 
Moreover the best constant $C_{1,n,A}$ is the inverse of the corresponding best constant of the isoperimetric inequality with the monomial weight:
\begin{equation}\label{Dis Isop}
	\frac{P(\Omega)}{m(\Omega)^{\frac{D-1}{D}}} \ge \frac{P(B_1^A)}{m(B_1^A)^{\frac{D-1}{D}}} = C_{1,n,A}^{-1},
\end{equation}
where $\Omega \subset \re^n$ is a bounded Lipschitz domain,
\[
	m(\Omega) = \intO x^A dx, \quad P(\Omega) = \intdO x^A ds_x,
\]
and $B_1^A$ denotes the intersection of the unit ball $B_1(0) \subset \re^n$ with $\re^n_A$:
\begin{equation}\label{B1A}
B_1^A = \re^n_A \cap B_1(0).
\end{equation}

In this paper,
we derive a sharp Logarithmic Sobolev inequality with monomial weights starting from the sharp Sobolev inequality with monomial weights above. 
We follow the idea by Beckner and Pearson \cite{Beckner-Pearson}. 
As in \cite{Beckner-Pearson}, the product structure of both the Euclidean space and the weight (in our case), and the asymptotic behavior of the constant \eqref{Cp} as $p \to \infty$, are essential. 
Also several related inequalities such as Shannon type and Heisenberg's uncertain principle type are also derived.
A characterization of the equality case for the Logarithmic Sobolev inequality is given when the exponents of the monomial weights are all zero or integers. 
Such a proof is new even in the unweighted case.

First, we obtain the following theorem where $H^1_0(\re^n, x^A\,dx)$ denotes $W^{1,2}_0(\re^n, x^A\,dx)$.

\begin{theorem}{\rm (Sharp Logarithmic Sobolev inequality with monomial weights)}
\label{Theorem:log-Sobolev}
Let $A = (A_1, A_2, \dots, A_n)$ be a nonnegative vector in $\re^n$ and $D=n+A_1+\cdots+A_n$.  For any $f \in H^1_0(\re^n,x^A\,dx)$ such that $\int_{\re^n_A} |f|^2 x^A dx = 1$, the inequality
\begin{equation}
\label{log-Sobolev}
	\int_{\re^n_A} |f|^2 \log |f|^2 x^A dx \le \frac{D}{2} \log \( \frac{2}{\Pi(A)eD} \int_{\re^n_A} |\nabla f|^2 x^A dx \)
\end{equation}
holds true, where
\begin{equation}\label{Pi A}
	\Pi(A)= \left[ \frac{\prod_{i=1}^n \Gamma(\frac{A_i+1}{2})}{2^k} \right]^{2/D},
\end{equation}
and $k = \sharp \{ i \, : \, A_i > 0 \}$.
The equality in \eqref{log-Sobolev} holds if $f(x)=\frac{e^{-\frac{|x|^2}{4}}}{\left(2\Pi(A)\right)^{\frac{D}{4}}}$, which satisfies that $\int_{\re^n_A} |f|^2 x^A\,dx=1$ and $\int_{\re^n_A} |x|^2|f|^2 x^A\,dx=D$.
\end{theorem}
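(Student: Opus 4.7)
My plan is to follow Beckner--Pearson \cite{Beckner-Pearson} and derive \eqref{log-Sobolev} from the sharp Sobolev inequality \eqref{Sobolev weight} via a limiting argument exploiting the product structure of both $\re^n_A$ and the weight $x^A$, combined with asymptotic analysis of the explicit constant $C_{p,n,A}$ in \eqref{Cp}. The idea is that by lifting a test function $f$ to a tensor product $F(x,y) = f(x) g(y)$ on an enlarged space $\re^n_A \times \re^N$ against a normalized Gaussian $g$, the sharp Sobolev inequality on the product space, applied at exponent $p = 2$ (so the target exponent $q_N = 2(D+N)/(D+N-2)$ tends to $2^+$ as $N \to \infty$), transforms via Stirling's expansion of the Gamma functions into log-Sobolev with sharp constant $\frac{2}{\Pi(A) e D}$.

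Concretely, I would fix $f \in H^1_0(\re^n, x^A dx)$ with $\int_{\re^n_A} |f|^2 x^A dx = 1$ and consider $F(x,y) = f(x) g(y)$ on $\re^n_A \times \re^N$ with product weight $(x,y)^{(A,0)} = x^A$ and homogeneous dimension $D+N$, where $g$ is a normalized Gaussian on $\re^N$ whose variance is chosen to balance the asymptotics. Applying Theorem \ref{Theorem:Sobolev} with $p = 2$ to $F$ yields $\|F\|_{L^{q_N}} \le C_{2,n+N,(A,0)} \|\nabla F\|_{L^2}$. The multiplicative factorization $\|F\|_{L^{q_N}} = \|f\|_{L^{q_N}(x^A dx)} \|g\|_{L^{q_N}}$ together with the Pythagorean decomposition $\|\nabla F\|_{L^2}^2 = \|\nabla f\|_{L^2(x^A dx)}^2 + \|\nabla g\|_{L^2}^2$ (valid at $p=2$ under $\|f\|_2=\|g\|_2=1$) separates the $f$- and $g$-contributions. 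Using the expansion $\log \|f\|_{L^{q_N}(x^A dx)} = \tfrac{q_N-2}{4}\int |f|^2 \log|f|^2 x^A dx + O((q_N-2)^2)$, which follows from $\int |f|^{q_N} x^A dx = 1 + \tfrac{q_N-2}{2}\int |f|^2 \log|f|^2 x^A dx + O((q_N-2)^2)$, and expanding $\log C_{2,n+N,(A,0)}$ and $\log \|g\|_{L^{q_N}}$ to order $1/N$ via Stirling's formula and Gaussian integrals, then dividing by $(q_N-2)/4$ and sending $N \to \infty$, the log-Sobolev inequality \eqref{log-Sobolev} should emerge with the sharp constant after an optimization over the variance of $g$.

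The equality case for $f(x) = (2\Pi(A))^{-D/4} e^{-|x|^2/4}$ is verified by direct computation. Using the one-dimensional formula $\int_0^\infty t^{A_i} e^{-t^2/2} dt = 2^{(A_i-1)/2}\Gamma(\tfrac{A_i+1}{2})$ and the recursion $\int_0^\infty t^{A_i+2} e^{-t^2/2} dt = (A_i+1)\int_0^\infty t^{A_i} e^{-t^2/2} dt$, one checks that $\int_{\re^n_A}|f|^2 x^A dx = 1$ (since $\prod_i \Gamma(\tfrac{A_i+1}{2}) = 2^k \Pi(A)^{D/2}$ by \eqref{Pi A}), that $\int_{\re^n_A}|x|^2 |f|^2 x^A dx = D$ by summing the one-dimensional identities, and that $\int_{\re^n_A} |\nabla f|^2 x^A dx = \tfrac14 \int_{\re^n_A} |x|^2 |f|^2 x^A dx = D/4$ from $|\nabla f|^2 = \tfrac14 |x|^2 |f|^2$; substituting shows both sides of \eqref{log-Sobolev} equal $-\tfrac{D}{2}\log(2\Pi(A)) - \tfrac{D}{2}$.

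The main obstacle will be the joint asymptotic analysis as $N \to \infty$: one must track cancellations between the leading $O(\log(D+N))$ terms in $\log C_{2,n+N,(A,0)}$ and $\log \|g\|_{L^{q_N}}$ — which individually diverge — and identify exactly which $O(1/N)$ corrections carry the weight-dependent factor $\Pi(A)$ (via Stirling's formula applied to the Gamma-function ratio in $C_{1,n+N,(A,0)}$) and the Euclidean constant $eD$ (from the Gaussian entropy $-\tfrac{N}{2}[\log(2\pi\sigma^2)+1]$ and the gradient term $\|\nabla g\|_{L^2}^2 = N/(4\sigma^2)$). A related subtlety is the optimal choice of the variance of $g$: it must be tuned either so that the $\|\nabla g\|_{L^2}^2$ contribution becomes negligible in the limit, or so that it combines with a final rescaling $f \mapsto \lambda^{D/2} f(\lambda x)$ (under which \eqref{log-Sobolev} is invariant) to produce precisely the constant $\frac{2}{\Pi(A)eD}$.
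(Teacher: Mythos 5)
Your verification of the equality case is fine, and the general family of ideas (sharp Sobolev in a growing dimension, product structure, Stirling asymptotics) is the right one. But the specific tensorization you propose --- lifting $f$ against a \emph{Gaussian} factor, $F(x,y)=f(x)g(y)$ with $g$ a fixed normalized Gaussian on $\re^N$ --- cannot produce the sharp constant, and this is a structural obstruction, not the bookkeeping issue you describe as the ``main obstacle''. The problem is a mismatch of orders. In your scheme the entropy of $f$ enters only once, at order $q_N-2\sim 4/N$ (your expansion $\log\|f\|_{L^{q_N}}=\frac{q_N-2}{4}\int|f|^2\log|f|^2x^A\,dx+O((q_N-2)^2)$ is correct), so at the end you must divide by $(q_N-2)/4$ and pass to the limit. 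However, your test functions are products, and no product is an extremal of the Sobolev inequality on the big space: the extremals \eqref{estr sobolev}, $(a+b|z|^2)^{1-M/2}$ with $M=D+N$, do not factorize. The slack this creates is itself of order $1/M$, i.e.\ exactly the same order as the entropy term, so it survives the division and pollutes the limit. Concretely, in the unweighted case, for the Gaussian $F(z)=e^{-|z|^2/4}$ on $\re^M$ and $q_M=\frac{2M}{M-2}$ one computes from \eqref{Cp} and Stirling that
\[
	\log C_{2,M,0}-\log\frac{\|F\|_{L^{q_M}(\re^M)}}{\|\nabla F\|_{L^2(\re^M)}}=\frac{1-\log 2}{2M}+O(1/M^2)>0,
\]
and the same holds with monomial weights. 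Carrying your expansion through (keeping the variance $\sigma$ of $g$ free; the $O(1)$ terms do cancel for every $\sigma$, as you predict), the order-$1/N$ terms yield, after optimization in $\sigma$, only
\[
	\int_{\re^n_A}|f|^2\log|f|^2\,x^A\,dx\ \le\ \frac D2\log\left(\frac{2}{\Pi(A)eD}\left(\frac{e}{2}\right)^{1/D}\int_{\re^n_A}|\nabla f|^2x^A\,dx\right),
\]
i.e.\ the constant is too large by the factor $(e/2)^{1/D}>1$, equivalently the entropy bound carries the additive error $\frac{1-\log 2}{2}$. No final rescaling $f\mapsto\lambda^{D/2}f(\lambda\,\cdot)$ can repair this, because the defective inequality is itself invariant under that scaling.

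The paper (following Beckner--Pearson) avoids this by tensorizing $f$ with copies of \emph{itself}: $F=f^{\otimes l}$ on $\re^{ln}$ with weight $B=(A,\dots,A)$, as in \eqref{F=}. Then $\log\|F\|_{L^{2_*(B)}}=l\log\|f\|_{L^{2_*(B)}}$ by Lemma \ref{Prop:product}, and since $l\,(2_*(B)-2)\to 4/D$, the entropy term is amplified $l$-fold and appears at order one in \eqref{51}; the product-versus-extremal slack remains $O(1/l)$ and vanishes in the limit relative to it. No division by the small parameter $q-2$ is then needed: one only needs Lemma \ref{Lemma0} (or Jensen's inequality) on the left and the single asymptotic $\lim_{l\to\infty}lC^2_{2,ln,B}=\frac{2}{\Pi(A)eD}$ of \eqref{C l} on the right. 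If you wish to keep an auxiliary Gaussian factor, you must also let the $f$-dependence grow with the dimension (e.g.\ $f^{\otimes l}\otimes g$ with $N=O(l)$), at which point the argument reduces to the paper's.
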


If we take $A = (0, \dots, 0)$, then  $k = 0$, $D = n$, and $\Pi(A) = \pi$, so we recover the classical Euclidean Logarithmic Sobolev inequality:
\begin{equation}
\label{log-Sobolev(Weissler)}
	\int_{\re^n} |f|^2 \log |f|^2 dx \le \frac{n}{2} \log \( \frac{2}{\pi e n} \int_{\re^n} |\nabla f|^2 dx \)
\end{equation}
for $f \in H^1_0(\re^n)$ with $\|f\|_{L^2(\re^n)} = 1$.
Stated in this form, \eqref{log-Sobolev(Weissler)} appears in a paper by Weissler \cite{Weissler}, 
but in terms of the Entropy power $N(g)=e^{-\frac{2}{n}\int_{\re^n} g \log g \, dx}$ and the Fisher information $I(g)=\int_{\re^n} \frac{|\nabla g|^2 }{g}dx$, the inequality 
$$
	\frac{1}{2\pi e}N(g)I(g)\geq n 
$$
goes back to Stam \cite{stam};
here $g$ is a positive function such that $\int_{\re^n} g\, dx = 1$. 
This is obtained by taking $f^2=g$ in \eqref{log-Sobolev(Weissler)}. 
By this inequality it follows that as information increases then the entropy (a measure of disorder) must increase also. 
For more information about the Logarithmic Sobolev inequalities we refer the reader to \cite{Gross}, \cite{delPino-Dolbeault}, \cite{Gentil03} and the book \cite{Lieb-Loss}.
Note that \eqref{log-Sobolev} is {\it not} invariant under the translation and the rotation of the space when $A \not\equiv (0, \cdots, 0)$, 
but is invariant with respect to the scaling $f\rightarrow f_\lambda(x)=\lambda^{\frac{D}{2}}f(\lambda x)$ for $\lambda>0$. 
Finally we stress that \eqref{log-Sobolev} cannot be obtained by a change of variables from the unweighted Logarithmic Sobolev inequality, even when $A_i\in\mathbb{N}$ for every $i=1,\cdots,n$.
This is different from the case for \eqref{Sobolev weight}.

\medskip
The characterization of the extremals for \eqref{log-Sobolev(Weissler)} is well-known (see \cite{Carlen}).
Here we propose a new (also in the unweighted case) and more elementary proof in the case when $A_i\in\mathbb{N}\cup\{0\}$ for every $i\in\{1,\cdots,n\}$.  

\begin{theorem}\label{Theorem:equality}
If $A_i\in\mathbb{N}\cup\{0\}$ for every $i\in\{1,\cdots,n\}$, then the equality in \eqref{log-Sobolev} occurs if and only if
\begin{equation}\label{Equality}
	f(x) = 
	\begin{cases}
	\(2\sigma\pi\)^{-\frac{n}{4}}e^{-\frac{|x-x_0|^2}{4\sigma}} & \mbox{ if } A_i=0 \quad \text{for any} \ i\in\{1,\cdots,n\}, \\
	\(2\sigma\Pi(A)\)^{-\frac{D}{4}}e^{-\frac{|x|^2}{4\sigma}} & \mbox{ if } A_i\in\mathbb{N} \quad \text{for some} \ i\in\{1,\cdots,n\}
	\end{cases}
\end{equation}
with $\sigma>0$ and $x_0\in \mathbb{R}^n$, respectively.
\end{theorem}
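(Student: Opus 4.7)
The plan is to reduce the characterization of extremals, when some $A_i\in\mathbb{N}$, to the classical unweighted Log-Sobolev inequality in a higher-dimensional Euclidean space via a rotation-averaging ``lifting,'' and then to handle the classical case by an elementary rigidity argument that covers also the case $A\equiv 0$.

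First, for $A_i\in\mathbb{N}\cup\{0\}$, I would introduce the lift $f\mapsto\tilde f$ sending $H^1_0(\re^n,x^A\,dx)$ into $H^1(\re^D)$ as follows: for each coordinate $i$ with $A_i\geq 1$, view $y^{(i)}\in\re^{A_i+1}$ and set $z_i=|y^{(i)}|$; for $i$ with $A_i=0$, set $z_i=y^{(i)}\in\re$. Define $\tilde f(y^{(1)},\ldots,y^{(n)})=f(z_1,\ldots,z_n)$. Polar integration in each block with $A_i>0$ yields
\[
	\int_{\re^D}|\tilde f|^2\,dy=C_A\int_{\re^n_A}|f|^2 x^A\,dx,\qquad\int_{\re^D}|\nabla\tilde f|^2\,dy=C_A\int_{\re^n_A}|\nabla f|^2 x^A\,dx,
\]
with $C_A=\prod_{i\,:\,A_i>0}|\mathbb{S}^{A_i}|$, together with the analogous entropy identity up to an additive constant $\log C_A$ that is absorbed by rescaling $\hat f=C_A^{-1/2}\tilde f$ to unit $L^2$-norm. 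Using $|\mathbb{S}^{A_i}|=2\pi^{(A_i+1)/2}/\Gamma((A_i+1)/2)$, one checks that $\pi/C_A^{2/D}=\Pi(A)$, so the weighted inequality \eqref{log-Sobolev} for $f$ is equivalent to the classical inequality \eqref{log-Sobolev(Weissler)} for $\hat f$ on $\re^D$, with equality holding simultaneously.

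Next, I would establish the classical rigidity in a self-contained way: if $g\in H^1(\re^D)$ with $\|g\|_{L^2}=1$ saturates \eqref{log-Sobolev(Weissler)}, then $g(y)=(2\sigma\pi)^{-D/4}e^{-|y-y_0|^2/(4\sigma)}$ for some $\sigma>0$ and $y_0\in\re^D$. The plan is to derive the Euler-Lagrange equation
\[
	-\Delta g=\alpha\,g\log g^2+\beta\,g,
\]
where $\alpha,\beta$ are determined by the normalization and the $L^2$-Lagrange multiplier, and then to deduce via moment identities (obtained by testing the equation against $1$, $y_j$ and $|y|^2$, together with the scale- and translation-invariance of the inequality, which allow one to assume $\int yg^2=0$ and $\int|y|^2 g^2=D$) that $-\log g$ is a quadratic polynomial. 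Positivity and integrability of $g$ then pin down the quadratic form and recover the Gaussian without invoking Hermite-expansion machinery.

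Finally, returning to the weighted case: the lift of an extremal satisfies $\tilde f(y)=Ce^{-|y-y_0|^2/(4\sigma)}$ and is, by construction, invariant under rotations within each block $y^{(i)}\in\re^{A_i+1}$ with $A_i\geq 1$; since a non-degenerate Gaussian on a Euclidean space of dimension $\geq 2$ is rotation-invariant about a point only if its center coincides with that point, the projection of $y_0$ onto each such block must vanish. Descending back to $\re^n_A$ recovers the second line of \eqref{Equality}, while in the unweighted case $A\equiv 0$ the lift is trivial and full translation freedom gives the first line. The main obstacle, and the true novelty of the argument, is the classical rigidity step: Carlen's original proof relies on a delicate Hermite polynomial analysis, and replacing it by a direct PDE-moment argument that rules out non-Gaussian solutions of the Euler-Lagrange equation using only positivity and integrability is precisely where the main work lies.
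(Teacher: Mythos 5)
Your proposal takes a genuinely different route from the paper's. The paper never leaves the weighted setting: it tensorizes, applying the sharp Sobolev inequality \eqref{Sobolev weight} to product functions $F_l(z)=\prod_{i=1}^l f(x^i)$ on $\re^{ln}$, invokes the classification of \emph{all} Sobolev extremizers (\eqref{classic equality}, resp. \eqref{estr sobolev} --- this is exactly where the hypothesis $A_i\in\N\cup\{0\}$ enters, via \cite{talenti best} and \cite{Cabre-RosOton}), and then extracts the Gaussian from the Stirling asymptotics of the extremal parameters $a_l,b_l$ as $l\to\infty$; Gaussian rigidity is thus an \emph{output} of Sobolev rigidity, which is why the paper can advertise a proof that is new even for $A\equiv 0$. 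Your lifting step instead reduces the weighted statement to the classical one on $\re^D$, and your bookkeeping is correct: with $C_A=\prod_{i\,:\,A_i>0}|\mathbb{S}^{A_i}|$, $|\mathbb{S}^{A_i}|=2\pi^{(A_i+1)/2}/\Gamma(\frac{A_i+1}{2})$, $\Gamma(\frac{1}{2})=\sqrt{\pi}$ and $D-\sum_{i:A_i>0}(A_i+1)=n-k$, one indeed checks $\pi C_A^{-2/D}=\Pi(A)$, so after the renormalization $\hat f=C_A^{-1/2}\tilde f$ the weighted inequality \eqref{log-Sobolev} for $f$ becomes precisely \eqref{log-Sobolev(Weissler)} in dimension $D$ for $\hat f$, with equality cases corresponding. (This appears to run against the Introduction's remark that \eqref{log-Sobolev} ``cannot be obtained by a change of variables'' from the unweighted inequality even for integer $A_i$; presumably that remark concerns transformations of $\re^n$ itself, since your dimensional lift with renormalization manifestly does reproduce the sharp constant.) The descent step --- a Gaussian invariant under the rotations of a block of dimension $A_i+1\ge 2$ must be centered at the origin of that block --- is also sound.

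The genuine gap is the unweighted rigidity on $\re^D$, onto which your reduction shifts all of the difficulty. The proposed mechanism --- derive $-\Delta g=\alpha\, g\log g^2+\beta\, g$, test against $1$, $y_j$, $|y|^2$, and conclude from positivity and integrability that $-\log g$ is quadratic --- is not an argument: finitely many integral identities cannot pin down the pointwise form of $g$, and uniqueness (up to translation and dilation) of positive normalized $H^1$ solutions of this logarithmic Schr\"odinger equation is itself a nontrivial theorem, of essentially the same depth as the rigidity you are trying to prove. The known proofs (Carlen \cite{Carlen}, semigroup/Bakry--\'Emery monotonicity, mass transport) all rely on an infinite-dimensional convexity or monotonicity mechanism, not on moments; your closing admission that this step is ``precisely where the main work lies'' concedes that the core of Theorem \ref{Theorem:equality} remains unproved in the proposal. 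If you instead simply cite \cite{Carlen} here, your proof does close, but it then inverts the paper's logic: the paper \emph{derives} the Gaussian characterization (new even in the unweighted case) from Sobolev rigidity, whereas your argument \emph{consumes} the classical characterization as input and cannot reprove it.

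One further point your descent glosses over: the claim that you ``recover the second line of \eqref{Equality}'' is inaccurate in the mixed case where some $A_i\ge 1$ and some $A_j=0$. Block-rotation invariance only kills the components of the center $y_0$ in the blocks of dimension $\ge 2$; the center remains free in every direction with $A_j=0$, and those translated Gaussians genuinely achieve equality, since the weight is translation invariant in such directions. So, carried out honestly, your method proves a corrected form of \eqref{Equality} rather than the statement as written; you should flag this explicitly --- it is in fact a point where your approach exposes an imprecision in the theorem (inherited from the centered form of the extremals quoted in Theorem \ref{Theorem:Sobolev}), rather than a defect of your reduction.
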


In order to explain the basic idea of the proof let us consider  $A = (0, \dots, 0)$. 
We take into account the following observations:

\begin{itemize}
\item[i)] Logarithmic Sobolev inequality can be obtained (see the proof of Theorem \ref{Theorem:log-Sobolev}) as a ``limit" of the Sobolev inequality for suitable functions;
\item[ii)] The equality case in the classical Sobolev inequality occurs if and only if the functions are of the form
\begin{equation}\label{classic equality}
	a^{-1}(1 + b|x-x_0|^{2})^{1-n/2},
\end{equation}
where $a\in\mathbb{R}-\{0\}$ and $b>0$ and $x_0\in\mathbb{R}^n$;
\item[iii)] The family of functions \eqref{classic equality} are densities of some generalized Cauchy distributions, 
which have the general form $a^{-1}(1 +b|x|^{2})^{-\beta}$ with $b,\beta>0$ and normalizing constant $a$ depending on $n$ an $\beta$. 
These probability measures may be considered (see \textit{e.g.} \cite{Bob}) as a natural \textquotedblleft pre-Gaussian model\textquotedblright, 
where the Gaussian case appears in the limit as $\beta\rightarrow + \infty$ (after proper rescaling of the coordinates).
\end{itemize}
\noindent Similar observations hold in the case $A \not\equiv (0, \cdots, 0)$.
However, since all extremals for \eqref{Sobolev weight} are given by \eqref{estr sobolev} only if $A_1,\cdots,A_n$ are integers or zero (see \cite{Cabre-RosOton}), 
we can derive a result only in this special case.

\medskip
As a corollary of Theorem \ref{Theorem:log-Sobolev}, we obtain a Nash type inequality with monomial weights as follows:
\begin{corollary} {\rm (Nash type inequality with monomial weights)}
\label{Corollary:Nash}

Let $A$ and $D$ be as in Theorem \ref{Theorem:log-Sobolev}. 
For any $f \in H^1_0(\re^n_A,  x^A \,dx)\cap L^1(\re^n_A,  x^A \,dx)$, we have the inequality
\begin{equation}
\label{Nash}
	\( \int_{\re^n_A} |f|^2 x^A dx \)^{1 + \frac{2}{D}} \le \ \frac{2}{\Pi(A)eD}  \( \int_{\re^n_A} |\nabla f|^2 x^A dx \) \( \int_{\re^n_A} |f| x^A dx \)^{\frac{4}{D}},
\end{equation}
where $\Pi(A)$ is defined in \eqref{Pi A}.
\end{corollary}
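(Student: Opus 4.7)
The plan is to derive \eqref{Nash} directly from Theorem~\ref{Theorem:log-Sobolev} via a standard Jensen inequality argument applied to the probability measure $|f|^2\, x^A\, dx$.

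First I would reduce to a normalized setting. Both sides of \eqref{Nash} are homogeneous of degree $2+\tfrac{4}{D}$ in $f$, so (excluding the trivial case $f\equiv 0$) it suffices to prove the inequality under the assumption $\int_{\re^n_A}|f|^2\, x^A\, dx = 1$. In this case $d\mu := |f|^2\, x^A\, dx$ is a probability measure on $\re^n_A$ concentrated on $\{f\neq 0\}$. Applying Jensen's inequality for the concave function $\log$ to the observable $1/|f|$ (with the convention $0\log 0 := 0$) gives
\[
	-\frac{1}{2}\int_{\re^n_A}|f|^2\log|f|^2\, x^A\,dx \;=\; \int_{\re^n_A}|f|^2\log\frac{1}{|f|}\, x^A\,dx \;\le\; \log\int_{\re^n_A}|f|\, x^A\,dx.
\]

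Chaining this bound with the sharp Logarithmic Sobolev inequality \eqref{log-Sobolev} yields
\[
	-2\log\int_{\re^n_A}|f|\, x^A\,dx \;\le\; \int_{\re^n_A}|f|^2\log|f|^2\, x^A\,dx \;\le\; \frac{D}{2}\log\(\frac{2}{\Pi(A)eD}\int_{\re^n_A}|\nabla f|^2\, x^A\,dx\).
\]
Exponentiating and rearranging produces
\[
	1 \;\le\; \frac{2}{\Pi(A)eD}\(\int_{\re^n_A}|\nabla f|^2\, x^A\,dx\)\(\int_{\re^n_A}|f|\, x^A\,dx\)^{4/D},
\]
which, read under the normalization $\int_{\re^n_A}|f|^2\, x^A\, dx = 1$, is exactly \eqref{Nash}. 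Undoing the normalization by scaling recovers the full statement for arbitrary $f$.

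I do not anticipate any serious obstacle: once Theorem~\ref{Theorem:log-Sobolev} is in hand, the derivation is essentially the textbook Nash-from-log-Sobolev route. The only minor technical issue is the justification of Jensen's inequality when $f$ vanishes on a set of positive $x^A\, dx$-measure, but this is dispatched by restricting the argument to the support $\{f\neq 0\}$, where $d\mu$ still has total mass $1$.
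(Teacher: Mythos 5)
Your proposal is correct and takes essentially the same route as the paper's own proof: Jensen's inequality for the probability measure $|f|^2 x^A\,dx$ applied to the observable $1/|f|$, chained with the sharp Logarithmic Sobolev inequality \eqref{log-Sobolev}, followed by exponentiation and homogeneity to remove the normalization $\int_{\re^n_A}|f|^2 x^A\,dx=1$. The paper (following Beckner's geometric argument) performs exactly these steps, including the same device of restricting to the set where $f\neq 0$ to justify Jensen's inequality.
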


The unweighted version of \eqref{Nash} is one of the main tools used by J. Nash in \cite{Nash} on the H\"{o}lder regularity of solutions of divergence form uniformly elliptic equations.
It is well-known that the Nash inequality can also be derived by combining the H\"{o}lder and the Sobolev inequality. 
Indeed in our case we may use \eqref{Sobolev weight} and the following H\"{o}lder inequality
$$
	\|f\|_{L^2(\re^n_A,x^A\, dx)}\leq \|f\|^{\theta}_{L^{2^*}(\re^n_A,x^A\, dx)}\|f\|^{1-\theta}_{L^1(\re^n_A,x^A\, dx)}, 
$$
where $0<\theta<1$ and $1=\frac{2\theta}{2_*}+2(1-\theta)$ where $2_* = \frac{2D}{D-2}$.
Even in the unweighted case the constant in \eqref{Nash} is not sharp as observed in \cite{CL1993}.

\medskip
Finally we prove a ``dual" inequality of the Logarithmic Sobolev inequality with monomial weight \eqref{log-Sobolev}.

\begin{theorem}{\rm (Shannon type inequality with monomial weights)}
\label{Theorem:Shannon}
Let $A$ and $D$ be as in Theorem \ref{Theorem:log-Sobolev}. 
For any $f \in L^2(\re^n, x^A dx)$ with $\int_{\re^n_A} |f|^2 x^A dx = 1$ and $\int_{\re^n_A} |f|^2 |x|^2 x^A dx < \infty$,
the inequality
\begin{equation}
\label{L^2-Shannon}
	-\int_{\re^n_A} |f|^2 \log |f|^2 x^A dx \le \frac{D}{2} \log \( \frac{2 \Pi(A) e}{D} \int_{\re^n_A} |f|^2 |x|^2 x^A dx \)
\end{equation}
holds true.

More generally, for any $f \in L^1_{\al}(\re^n, x^A dx)$ with $\int_{\re^n_A} |f| x^A dx = 1$,
the inequality
\begin{equation}
\label{Shannon}
	-\int_{\re^n_A} |f| \log |f| x^A dx \le \frac{D}{\al} \log \( \frac{\al C_A(\al) e}{D} \int_{\re^n_A} |f| |x|^\al x^A dx \)
\end{equation}
holds true, where
\[
	L^1_{\al}(\re^n, x^A dx) = \{ f \in L^1(\re^n, x^A dx) \, | \, \int_{\re^n_A} |f| |x|^{\al} x^A dx < \infty \}
\]
for $\al > 0$ and
\begin{equation}
\label{C_A}
	C_A(\al) = \( \frac{\Gamma(\frac{D}{\al}+1)}{\Gamma(\frac{D}{2}+1)} \Pi(A)^{\frac{D}{2}} \)^{\al/D}.
\end{equation}
The equality in \eqref{Shannon} holds if $f(x) = \exp (-C_A(\al) |x|^{\al} )$ (up to scaling).
\end{theorem}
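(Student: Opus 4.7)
The strategy is to exploit the duality between \eqref{Shannon} and \eqref{log-Sobolev}: \eqref{Shannon} is a maximum-entropy statement, saying that among probability densities on $(\re^n_A, x^A dx)$ with a prescribed $\al$-th moment, the Shannon entropy is maximized by a density proportional to $\exp(-\la|x|^\al)$. Accordingly, the natural tool is Gibbs' inequality (non-negativity of relative entropy), not the Sobolev-to-log-Sobolev chain used for Theorem~\ref{Theorem:log-Sobolev}. I would prove the general form \eqref{Shannon} first and then recover \eqref{L^2-Shannon} by applying \eqref{Shannon} to $|f|^2$ with $\al = 2$; this is consistent because the definition \eqref{C_A} immediately gives $C_A(2) = \Pi(A)$ (the prefactor $\Gamma(D/2+1)/\Gamma(D/2+1)$ being $1$).

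Assuming without loss of generality that $f \ge 0$ with $\int_{\re^n_A} f\, x^A dx = 1$, I would introduce the one-parameter family of probability densities
\[
    q_\la(x) = c(\la)\, e^{-\la |x|^\al}, \qquad \la > 0,
\]
with respect to $x^A dx$. To compute $c(\la)$ I would first establish the radial integration formula
\[
    \int_{\re^n_A} F(|x|)\, x^A dx = \frac{2\, \Pi(A)^{D/2}}{\Gamma(D/2)} \int_0^\infty F(r)\, r^{D-1}\, dr,
\]
obtained by equating the product and radial evaluations of $\int_{\re^n_A} e^{-|x|^2}\, x^A dx$ (the product form uses $\int_0^\infty e^{-t^2} t^{A_i}\, dt = \tfrac{1}{2}\Gamma\bigl(\tfrac{A_i+1}{2}\bigr)$ when $A_i > 0$, and $\int_{\re} e^{-t^2}\, dt = \Gamma\bigl(\tfrac{1}{2}\bigr)$ when $A_i = 0$, assembling exactly to $\Pi(A)^{D/2}$). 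Then the substitution $u = \la r^\al$ yields
\[
    c(\la)^{-1} = \int_{\re^n_A} e^{-\la|x|^\al}\, x^A dx = \frac{2\, \Pi(A)^{D/2}\, \Gamma(D/\al)}{\al\, \Gamma(D/2)\, \la^{D/\al}}.
\]

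Next, Gibbs' inequality (i.e., $\log t \le t - 1$ applied to $t = q_\la/f$) gives
\[
    -\int_{\re^n_A} f \log f\, x^A dx \le -\int_{\re^n_A} f \log q_\la\, x^A dx = -\log c(\la) + \la \int_{\re^n_A} f\, |x|^\al\, x^A dx,
\]
whose right-hand side has the form $K - \tfrac{D}{\al}\log\la + \la I$ in $\la$, with $I := \int_{\re^n_A} f |x|^\al x^A dx$ and $K$ independent of $\la$. Optimizing at $\la^\ast = D/(\al I)$ and regrouping the constants via $\Gamma(s+1) = s\Gamma(s)$ into the definition \eqref{C_A} of $C_A(\al)$ produces \eqref{Shannon}. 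The equality case of Gibbs forces $f = q_{\la^\ast}$ a.e., which (after checking that $\exp(-C_A(\al)|x|^\al)$ itself normalizes to $1$ by the same computation) identifies the extremal announced in the statement.

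I do not foresee any conceptual obstacle: the proof reduces to Gibbs' inequality plus a scalar optimization. The one step requiring care is verifying that the normalization constant produced above matches the form $\frac{\al C_A(\al) e}{D}$ in \eqref{Shannon}; this amounts to the Gamma-function identity $\Gamma(D/\al+1)/\Gamma(D/2+1) = \tfrac{2}{\al}\,\Gamma(D/\al)/\Gamma(D/2)$, a one-line manipulation. The only other ingredient is the radial integration formula for monomial weights, which is already implicit in the computations behind Theorem~\ref{Theorem:Sobolev}.
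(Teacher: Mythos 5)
Your proof is correct and is essentially the paper's own argument: the paper likewise establishes \eqref{Shannon} first by comparing $f$ with the normalized reference density $\phi_{\al}(x)=e^{-C_A(\al)|x|^{\al}}$ via Jensen's inequality (exactly your Gibbs step), computes the normalization through the polar-coordinate formula of Lemma \ref{Lemma:Gaussian}, performs a one-parameter scalar optimization (the paper rescales $f\mapsto \la^{D}f(\la x)$ rather than varying the reference $q_{\la}$, which is the same optimization after a change of variables), and then recovers \eqref{L^2-Shannon} by applying \eqref{Shannon} with $\al=2$ to $|f|^2$. The only cosmetic differences are where the free parameter sits and that you rederive the radial integration constant from the product structure instead of quoting Lemma \ref{Lemma:Gaussian}.
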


Note that in \eqref{L^2-Shannon} $C_A(2) = \Pi(A)$ and the equality holds for $f(x)=\frac{e^{-\frac{|x|^2}{4}}}{\left(2\Pi(A)\right)^{\frac{D}{4}}}$ 
which satisfies $\int_{\re^n_A} |f|^2 x^A\,dx=1$ and $\int_{\re^n_A} |x|^2|f|^2 x^A\,dx=D$. 
Moreover the inequality \eqref{Shannon} is invariant with respect to the scaling $f \mapsto f_{\la}(x) = \la^D f(\la x)$ for $\la > 0$.
An unweighted version of this Theorem appears in \cite{Ogawa-Seraku} and \cite{Ogawa-Wakui}.
Classical Shannon's inequality states that the normal distribution maximizes the Shannon Entropy among all distributions with fixed variance $\sigma^2$ and mean $\mu$. 
The inequality takes the form \eqref{Shannon} (without weight) since the Shannon  Entropy of normal distribution is $\frac{n}{2}\log\left(\frac{2\pi e}{n}\sigma^2\right)$.

\medskip
Inequalities \eqref{log-Sobolev} and \eqref{L^2-Shannon} give a lower and an upper bound of the entropy term. Indeed we have
\begin{align*}
	-\frac{D}{2} \log \( \frac{2 \Pi(A) e}{D} \int_{\re^n_A} |f|^2 |x|^2 x^A dx \)
	&\le \int_{\re^n_A} |f|^2 \log |f|^2 x^A dx \\
	&\le \frac{D}{2} \log \( \frac{2}{\Pi(A) e D} \int_{\re^n_A} |\nabla f|^2 x^A dx \)
\end{align*}
for $f \in H^1_0(\re^n, x^A dx)$ with $\int_{\re^n_A} |f|^2 x^A dx = 1$ and $\int_{\re^n_A} |f|^2 |x|^2 x^A dx < \infty$.
As an easy consequence we get the following corollary.
\begin{corollary}{\rm (Heisenberg's uncertainty principle type inequality with monomial weights)}
\label{Theorem:Heisenberg}
For any $f \in H^1_0(\re^n, x^A dx)$ with
\[
	\int_{\re^n_A} |f|^2 x^A dx = 1 \quad \text{and} \quad \int_{\re^n_A} |f|^2 |x|^2 x^A dx < \infty,
\]
the inequality
\[
	\frac{D^2}{4} \le \(\int_{\re^n_A} |f|^2 |x|^2 x^A dx \)\( \int_{\re^n_A} |\nabla f|^2 x^A dx \)
\]
holds true.
The equality holds for $f(x)=\frac{e^{-\frac{|x|^2}{4}}}{\left(2\Pi(A)\right)^{\frac{D}{4}}}$, 
which satisfies $\int_{\re^n_A} |f|^2 x^A\,dx=1$ and $\int_{\re^n_A} |x|^2|f|^2 x^A\,dx=D$.
\end{corollary}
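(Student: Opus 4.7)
The plan is to read off this inequality as a direct corollary of the sandwich estimate displayed immediately before its statement: the lower bound comes from the $L^2$-Shannon type inequality \eqref{L^2-Shannon}, and the upper bound from the Logarithmic Sobolev inequality \eqref{log-Sobolev}. Because both inequalities are normalized by the probability density $|f|^2 x^A\,dx$ and involve the same constant $\Pi(A)$, chaining them should produce a clean bound with the $\Pi(A)e$ factors cancelling exactly.

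Concretely, I would divide the displayed chain by $D/2$, discard the middle entropy term, move the left-hand logarithm across to the right, and then exponentiate. The two logarithm arguments multiply to
\[
\frac{4}{D^2}\(\int_{\re^n_A}|f|^2|x|^2 x^A dx\)\(\int_{\re^n_A}|\nabla f|^2 x^A dx\),
\]
since the factor $\frac{2\Pi(A)e}{D}\cdot \frac{2}{\Pi(A)eD}=\frac{4}{D^2}$. The resulting inequality is exactly $\frac{D^2}{4}\le \(\int_{\re^n_A}|f|^2|x|^2 x^A dx\)\(\int_{\re^n_A}|\nabla f|^2 x^A dx\)$. For the equality statement I would verify that the Gaussian $f(x)=(2\Pi(A))^{-D/4}e^{-|x|^2/4}$ saturates both \eqref{log-Sobolev} and \eqref{L^2-Shannon} simultaneously; a short computation using $\nabla f = -\tfrac{x}{2}f$ then gives $\int_{\re^n_A}|\nabla f|^2 x^A dx = \tfrac14 \int_{\re^n_A}|x|^2|f|^2 x^A dx = D/4$, which combined with $\int_{\re^n_A}|x|^2|f|^2 x^A dx = D$ from Theorem \ref{Theorem:log-Sobolev} yields the product $D\cdot D/4 = D^2/4$.

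There is no substantive obstacle here: Theorems \ref{Theorem:log-Sobolev} and \ref{Theorem:Shannon} contain all of the work, and the corollary is essentially a one-line arithmetic consequence. The only thing that has to be checked carefully is the bookkeeping of the constants in the sandwich, which has already been arranged so that $\Pi(A)e$ and $D$ cancel without any need to optimize over an auxiliary parameter.
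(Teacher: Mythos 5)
Your proposal is correct and takes essentially the same route as the paper, which presents this corollary as an immediate consequence of the sandwich of the entropy term between \eqref{L^2-Shannon} and \eqref{log-Sobolev}, with the constants $\frac{2\Pi(A)e}{D}\cdot\frac{2}{\Pi(A)eD}=\frac{4}{D^2}$ cancelling exactly as you compute. Your equality verification via $\nabla f=-\tfrac{x}{2}f$ together with $\int_{\re^n_A}|x|^2|f|^2 x^A\,dx=D$ also matches the paper's stated check for the Gaussian.
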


The classical Heisenberg's uncertainty inequality, a precise mathematical formulation of the uncertainty principle of quantum mechanics, states that 
\[
	\( \int_{\re^n} |x|^2 |f(x)|^2 dx \) \( \int_{\re^n} |\nabla f(x)|^2 dx \) \ge \frac{n^2}{4} \( \int_{\re^n} |f(x)|^2 dx \)^2
\]
for any $f \in H^1(\re^n)$. 
Define the Fourier transform of $f$ as $\widehat{f}(\xi) = (2\pi)^{-n/2} \int_{\re^n} f(x) e^{-i x \cdot \xi} dx$ 
and recall that $\int_{\re}|\nabla f(x)|^2 dx = \int_{\re} |\xi|^2|\widehat{f}(\xi)|^2 \,d\xi$. 
Then it follows that $\(\int_{\re^n} |x-a|^2|f(x)|^2 \,dx \)\(\int_{\re^n} |\xi-b|^2|\widehat{f}(\xi)|^2 \,d\xi \) \ge \frac{n^2}{4}$ 
for any $f$ with $\int_{\re^n} |f(x)|^2 dx = 1$ and for any $a,b\in\re^n$. 
In other terminology, the above inequality reads ${\rm Var}(|f|^2) {\rm Var}(|\widehat{f}|^2) \ge n^2/4$, where
${\rm Var}(|f|^2) = \inf_{a \in \re^n} \int_{\re^n} |x-a|^2|f(x)|^2 \,dx$. 
The variance is a measure of the concentration of the probability density $|f|^2$. 
The more concentrated $f$ is around $a$, the smaller the variance will be. 
The above inequality states that if $f$ is concentrated around $a$, then $\widehat{f}$ cannot be concentrated around $b$, no matter which point $b$ in $\re^n$ we choose.
For more information on the uncertainty inequality, see \textit{e.g.} \cite{Folland-Sitaram}.
\medskip

The structure of the paper is as follows:
The proofs of all results stated here are given in $\S2$. 
Remarks and several related inequalities are discussed in $\S3$.

\section{Proofs}

First we collect here several lemmas which will be useful later.

Next lemma is an exercise of the book by W. Rudin \cite{Rudin} Chapter 3, page 71, 
see also \cite{Beckner3} page 122, and \cite{Park}.

\begin{lemma}\label{Lemma0}
Let $(X, \mu)$ be a measure space with $\mu(X) = 1$ and assume that $g \in L^r(X, \mu)$ for some $r > 0$.
Then it holds
\begin{equation}
\label{eq1}
	\lim_{p \to +0} \( \int_X |g|^p d\mu \)^{1/p} = \exp \( \int_X \log |g(x)| d\mu \)
\end{equation}
if $\exp(-\infty)$ is defined to be $0$.
\end{lemma}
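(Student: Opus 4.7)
The plan is to take logarithms on both sides and establish that
\[
\lim_{p \to 0^+} \frac{1}{p} \log \int_X |g|^p \, d\mu \;=\; \int_X \log |g| \, d\mu,
\]
with the convention that if the right-hand side equals $-\infty$ then so does the left. Exponentiating will then yield \eqref{eq1}, using $\exp(-\infty) = 0$ in the degenerate case. Throughout, the hypothesis $\mu(X)=1$ will be used crucially so that constants integrate to themselves.

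First I would sandwich $F(p) := \frac{1}{p} \log \int_X |g|^p d\mu$ between two expressions that share the same limit. Concavity of $\log$ combined with Jensen's inequality immediately gives the constant lower bound
\[
F(p) \;\ge\; \frac{1}{p}\int_X \log |g|^p \, d\mu \;=\; \int_X \log |g| \, d\mu.
\]
For an upper bound, the elementary inequality $\log(1+x) \le x$ (valid for $x > -1$), applied with $x = \int_X |g|^p d\mu - 1$, yields
\[
F(p) \;\le\; \int_X \frac{|g|^p - 1}{p} \, d\mu.
\]
Thus the proof reduces to showing that the integral on the right converges to $\int_X \log|g|\, d\mu$ as $p \to 0^+$.

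The key tool is a monotonicity fact: for any $a > 0$, the function $p \mapsto \frac{a^p - 1}{p}$ is nondecreasing on $(0, \infty)$ with pointwise limit $\log a$ as $p \to 0^+$ (this is a direct consequence of the convexity of $p \mapsto a^p$ vanishing at $p=0$). Hence for each $x$ with $|g(x)| > 0$, the map $p \mapsto \frac{|g(x)|^p - 1}{p}$ decreases to $\log|g(x)|$ as $p \downarrow 0$; on $\{|g|=0\}$ one has $\frac{|g|^p-1}{p} = -1/p \to -\infty = \log 0$. Moreover, for $0 < p \le r$ the same monotonicity gives $\frac{|g|^p - 1}{p} \le \frac{|g|^r - 1}{r}$, whose right-hand side is integrable since $g \in L^r(X,\mu)$ and $\mu(X)=1$. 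Applying the monotone convergence theorem to the nonnegative, increasing family $\frac{|g|^r - 1}{r} - \frac{|g|^p - 1}{p}$ (as $p \downarrow 0$) then delivers the desired convergence, allowing the limit to be $-\infty$.

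Combining the sandwich $\int_X \log|g|\, d\mu \le F(p) \le \int_X \frac{|g|^p - 1}{p}\, d\mu$ with the computed limit on the right forces $F(p) \to \int_X \log|g|\, d\mu$, and exponentiation completes the proof. The main subtlety I expect is the case $\int_X \log|g|\, d\mu = -\infty$: a naive dominated convergence approach would fail, because one cannot produce an integrable \emph{lower} bound for $\frac{|g|^p-1}{p}$ that does not blow up as $p \to 0^+$ near the zero set of $g$. The monotonicity-based argument above sidesteps this difficulty by reducing the whole question to a one-sided monotone limit, which handles the infinite case without additional effort.
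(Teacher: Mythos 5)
Your proposal is correct, and it takes a genuinely different route from the paper's own proof. The paper argues by calculus: after noting via H\"older's inequality that $p \mapsto (\int_X |g|^p d\mu)^{1/p}$ is nondecreasing, it sets $F(p) = \int_X |g|^p d\mu$, uses the domination $|g|^p \bigl|\log|g|\bigr| \le C_{\eps}(1+|g|^r)$ together with dominated convergence to show $F \in C^1((0,r))$ with $F'(p) = \int_X |g|^p \log|g| \, d\mu$, passes to $G = \log F$, and identifies the desired limit $\lim_{p\to 0^+} G(p)/p$ with the one-sided derivative $G'_+(0) = \lim_{p\to 0^+} G'(p) = \int_X \log|g| \, d\mu$ (a mean-value-theorem step). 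You instead sandwich $\frac{1}{p}\log \int_X |g|^p d\mu$ between the Jensen lower bound $\int_X \log|g| \, d\mu$ and the upper bound $\int_X \frac{|g|^p-1}{p}\, d\mu$ coming from $\log(1+x)\le x$, then compute the limit of the upper bound by monotone convergence, using that the difference quotients $\frac{a^p-1}{p}$ of the convex function $p \mapsto a^p$ decrease pointwise to $\log a$ as $p \downarrow 0$ and are bounded above by the integrable function $\frac{|g|^r-1}{r}$; finiteness of $\int_X \frac{|g|^r-1}{r}\, d\mu$ (from $g \in L^r$ and $\mu(X)=1$) is what legitimizes subtracting it in the MCT step. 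Your route is more elementary and self-contained: no differentiation under the integral sign, no $C^1$ regularity, no mean value theorem, and the degenerate case $\int_X \log|g|\, d\mu = -\infty$ --- which in the paper forces one to interpret $F'_+(0)$ and $G'_+(0)$ as values in $[-\infty,+\infty)$ and to run the difference-quotient argument with a possibly infinite limit --- is absorbed automatically, exactly as you point out; it also establishes existence of the limit directly, without the preliminary H\"older monotonicity. Moreover, the paper's assertion $\lim_{p\to 0^+}\int_X |g|^p\log|g|\,d\mu = \int_X \log|g|\,d\mu$ itself conceals a convergence argument of precisely the monotone type you make explicit. What the paper's approach buys in exchange is stronger structural information: it shows $p \mapsto \log\int_X |g|^p d\mu$ is $C^1$ with the explicit derivative $F'(p)/F(p)$, the standard ``derivative of the $L^p$-norm at $p=0$'' statement, whereas your sandwich only identifies the limit.
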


\begin{lemma}
\label{Lemma:Gaussian}
For any $\al > 0$ and $t > 0$, we have
\begin{align}
\label{G1}
	&\int_{\re^n_A} e^{-t|x|^{\al}} x^A dx = t^{-\frac{D}{\al}} \frac{\Gamma(\frac{D}{\al}+1)}{\Gamma(\frac{D}{2}+1)} \Pi(A)^{\frac{D}{2}}, \\
\label{G2}
	&\int_{\re^n_A} e^{-t|x|^{\al}} |x|^{\al} x^A dx = \frac{D}{\al} t^{-\frac{D}{\al}-1} \frac{\Gamma(\frac{D}{\al}+1)}{\Gamma(\frac{D}{2}+1)} \Pi(A)^{\frac{D}{2}}.
\end{align}
In particular,
\[
	\int_{\re^n_A} e^{-C_A(\al) |x|^{\al}} x^A dx = 1, \quad \int_{\re^n_A} e^{-C_A(\al)|x|^{\al}} |x|^{\al} x^A dx = \frac{D}{\al C_A(\al)},
\]
where $C_A(\al)$ is defined by \eqref{C_A}.
\end{lemma}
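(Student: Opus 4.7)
The plan is to compute \eqref{G1} by reducing to a radial integral via generalized polar coordinates adapted to the set $\re^n_A$, and then deduce \eqref{G2} by differentiating in $t$.

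First I would write $x = r\omega$ with $r = |x| > 0$ and $\omega \in S^{n-1} \cap \re^n_A$. Since $x^A = r^{A_1+\cdots+A_n}\omega^A$ and $dx = r^{n-1} dr\, d\sigma(\omega)$, the weighted volume element factorizes as $x^A dx = r^{D-1}\omega^A dr\, d\sigma(\omega)$. Hence
\[
	\int_{\re^n_A} e^{-t|x|^{\al}} x^A dx = \( \int_0^\infty e^{-tr^{\al}} r^{D-1} dr \) \( \int_{S^{n-1} \cap \re^n_A} \omega^A d\sigma(\omega) \).
\]
The radial integral is computed by the substitution $s = tr^{\al}$, giving $\frac{1}{\al} t^{-D/\al} \Gamma(D/\al) = \frac{1}{D} t^{-D/\al} \Gamma(D/\al+1)$.

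Next I need the value of the angular integral $M_A := \int_{S^{n-1} \cap \re^n_A} \omega^A d\sigma(\omega)$, which depends only on $A$. To evaluate it, I exploit the product structure that appears when $\al = 2$: Fubini gives
\[
	\int_{\re^n_A} e^{-t|x|^{2}} x^A dx = \prod_{A_i > 0} \int_0^{\infty} e^{-tx_i^2} x_i^{A_i} dx_i \cdot \prod_{A_i = 0} \int_{-\infty}^{\infty} e^{-tx_i^2} dx_i,
\]
and each one-dimensional integral is a standard Gamma evaluation, producing $2^{-k}t^{-D/2}\prod_{i=1}^n \Gamma(\frac{A_i+1}{2}) = t^{-D/2}\Pi(A)^{D/2}$ by the definition \eqref{Pi A} of $\Pi(A)$. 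Comparing with the polar-coordinate expression for $\al = 2$ yields $M_A = D\,\Pi(A)^{D/2}/\Gamma(\frac{D}{2}+1)$. Substituting this value of $M_A$ into the polar formula for general $\al$ gives exactly \eqref{G1}.

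The identity \eqref{G2} then follows by differentiating both sides of \eqref{G1} with respect to $t$ (justified by the dominated convergence theorem since the derivative has an integrable dominating function $|x|^{\al} e^{-\frac{t_0}{2}|x|^{\al}}$ on any neighborhood of $t_0 > 0$). Finally, the two displayed special cases are a direct calculation: with $t = C_A(\al)$ as defined in \eqref{C_A}, one checks that $C_A(\al)^{D/\al} = \frac{\Gamma(D/\al+1)}{\Gamma(D/2+1)}\Pi(A)^{D/2}$, so \eqref{G1} reduces to $1$ and \eqref{G2} reduces to $D/(\al C_A(\al))$. The main (minor) obstacle is handling the angular integral cleanly, and the device of evaluating $M_A$ indirectly through the product-structure case $\al = 2$ avoids any messy direct computation on $S^{n-1}\cap\re^n_A$.
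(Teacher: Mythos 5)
Your proof is correct, and it differs from the paper's in the one step that actually carries the content. Both arguments start identically: polar coordinates on $\re^n_A$ factor the integral into a radial Gamma integral times an angular integral $M_A = \int_{\pd B_1 \cap \re^n_A} \omega^A \, dS_\omega$, and \eqref{G2} follows by differentiating \eqref{G1} in $t$ (the paper does this without comment; your dominated-convergence justification is a welcome extra). The divergence is in how $M_A$ is evaluated. The paper simply cites Cabr\'e--Ros-Oton (their Theorem 1.4 and Lemma 4.1) for the identities $P(B_1^A) = D\, m(B_1^A)$ and $m(B_1^A) = \Pi(A)^{D/2}/\Gamma(\tfrac{D}{2}+1)$, which together give $M_A = D\,\Pi(A)^{D/2}/\Gamma(\tfrac{D}{2}+1)$, and then obtains general $t$ by the rescaling $y = t^{1/\al}x$. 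You instead determine $M_A$ self-containedly: Fubini at $\al = 2$ splits the Gaussian integral into one-dimensional factors $\tfrac12 t^{-(A_i+1)/2}\Gamma(\tfrac{A_i+1}{2})$ (for $A_i>0$) and $t^{-1/2}\Gamma(\tfrac12)$ (for $A_i=0$), whose product is $t^{-D/2}\Pi(A)^{D/2}$, and comparison with the polar expression pins down $M_A$. This is the classical ``compute the sphere's measure from the Gaussian'' device, and it is essentially how the cited constant is derived in Cabr\'e--Ros-Oton in the first place; so your proof internalizes what the paper outsources. What the paper's route buys is brevity and consistency with its earlier use of $P(B_1^A)$ and $m(B_1^A)$; what yours buys is a proof of the lemma that needs no external input beyond one-dimensional Gamma integrals. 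All your intermediate constants check out: $\tfrac1\al\Gamma(\tfrac{D}{\al}) = \tfrac1D\Gamma(\tfrac{D}{\al}+1)$, the factor $2^{-k}$ matches the definition \eqref{Pi A} of $\Pi(A)$, and $C_A(\al)^{D/\al} = \frac{\Gamma(D/\al+1)}{\Gamma(D/2+1)}\Pi(A)^{D/2}$ gives the two normalization identities.
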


\begin{proof}
Since \eqref{G2} is derived from \eqref{G1} by differentiating it with respect to $t$, we prove \eqref{G1} only.
Let $B_1^A$ be defined as in \eqref{B1A}
and put $x = (x_1, \cdots, x_n) = r \omega$, where $r = |x|$ and $\omega = (\omega_1, \cdots, \omega_n)$ be the unit vector in $\pd B_1^A = \pd B_1 \cap \re^n_A$.
Note that $\pd B_1^A$ is the curved part of the boundary portion of $B_1^A$.
Then $x^A = (r\omega)^A = (r\omega_1)^{A_1} \cdots (r\omega_n)^{A_n}$ and
\[
	x^A dx = (r\omega)^A r^{n-1} dr dS_\omega = r^{A_1 + \cdots + A_n + n-1} \omega^A \,dr \,dS_{\omega},
\]
where $dS_{\omega}$ denotes the surface measure on
 $\pd B_1^A$.
We calculate
\begin{align*}
	\int_{\re^n_A} e^{-|x|^{\al}} x^A dx &= \int_{\omega \in \pd B_1^A} \int_0^{\infty} e^{-r^\al} (r \omega)^A r^{n-1} dr dS_{\omega} \\
	&= \( \int_{\omega \in \pd B_1^A} \omega^A dS_{\omega} \) \int_0^{\infty} e^{-r^\al} r^{D-1} dr \\
	&= P(B_1^A) \frac{1}{\al} \Gamma\(\frac{D}{\al}\),
\end{align*}
where $P(B_1^A) = \int_{\omega \in \pd B_1^A} \omega^A dS_{\omega}$.
As observed in \cite{Cabre-RosOton} (Theorem 1.4 and Lemma 4.1), $P(B_1^A) = D m(B_1^A)$ and $m(B_1^A) = \int_{B_1^A} x^A dx = \frac{\Pi(A)^{\frac{D}{2}}}{\Gamma(\frac{D}{2}+1)}$.
Thus we obtain
\begin{align*}
	\int_{\re^n_A} e^{-|x|^{\al}} x^A dx = \frac{\Gamma\(\frac{D}{\al} + 1\)}{\Gamma\(\frac{D}{2}+1\)} \Pi(A)^{\frac{D}{2}}.
\end{align*}
The transformation $t^{1/\al} x = y$ for $x \in \re^n_A$ yields \eqref{G1}.
\end{proof}

\begin{lemma}\label{lemma computation}
Let $\sigma>0$. 
Then we have
\begin{equation}\label{int cauchy bis}
	\int_{\re^n_A} \frac{1}{(1 +\sigma |x|^2)^{\beta}}\,x^A \,dx = \(\frac{\Pi(A)}{\sigma}\)^{\frac{D}{2}}\frac{\Gamma\(\beta-\frac{D}{2}\)}{\Gamma(\beta)} \quad \hbox{ for } \beta>\frac{D}{2}.
\end{equation}
In particular,
\begin{equation}\label{int cauchy}
	\int_{\re^n} \frac{1}{(1 +\sigma|x-x_0|^2)^{\beta}} dx= \(\frac{\pi}{\sigma}\)^{\frac{n}{2}}\frac{\Gamma\(\beta-\frac{n}{2}\)}{\Gamma(\beta)} \quad \hbox{ for } \beta>\frac{n}{2}
\end{equation}
for any $x_0 \in \re^n$.
\end{lemma}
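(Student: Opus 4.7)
The plan is to reduce the integral to a one-dimensional Beta/Gamma integral, using the product-Gaussian computation already carried out in Lemma \ref{Lemma:Gaussian} together with the standard integral representation
\[
	\frac{1}{(1+\sigma |x|^2)^{\beta}} = \frac{1}{\Gamma(\beta)} \int_0^\infty t^{\beta - 1} e^{-t(1+\sigma|x|^2)}\, dt.
\]
This representation converts the rational weight into a Gaussian in $x$ at the cost of one extra $t$-integration, which is convenient because Lemma \ref{Lemma:Gaussian} computes the Gaussian integral against $x^A$ explicitly.

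For \eqref{int cauchy bis}, I would substitute the above identity into the left-hand side and swap the order of integration (legitimate by Tonelli, since the integrand is nonnegative):
\[
	\int_{\re^n_A} \frac{x^A}{(1+\sigma|x|^2)^{\beta}}\, dx
	= \frac{1}{\Gamma(\beta)} \int_0^\infty t^{\beta - 1} e^{-t}\!\left( \int_{\re^n_A} e^{-(t\sigma)|x|^2}\, x^A\, dx \right) dt.
\]
Apply Lemma \ref{Lemma:Gaussian} with $\al = 2$ and parameter $t\sigma$; the quotient $\Gamma(\tfrac{D}{2}+1)/\Gamma(\tfrac{D}{2}+1)$ is $1$, so the inner integral equals $(t\sigma)^{-D/2}\,\Pi(A)^{D/2}$. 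Substituting back gives
\[
	\frac{\Pi(A)^{D/2}}{\sigma^{D/2}\,\Gamma(\beta)} \int_0^\infty t^{\beta - 1 - D/2} e^{-t}\, dt
	= \left(\frac{\Pi(A)}{\sigma}\right)^{D/2} \frac{\Gamma(\beta - D/2)}{\Gamma(\beta)},
\]
which is exactly \eqref{int cauchy bis}; the hypothesis $\beta > D/2$ ensures convergence of the last Gamma integral.

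For \eqref{int cauchy}, the Lebesgue measure $dx$ is translation invariant, so the substitution $y = x - x_0$ reduces to the $x_0 = 0$ case. Taking $A = (0,\ldots,0)$ in \eqref{int cauchy bis} gives $k = 0$, $D = n$, and $\Pi(A) = \Gamma(1/2)^{2/n \cdot n/2} \cdot \ldots = \pi$ (since $\Gamma(1/2) = \sqrt{\pi}$, so $\Pi(A) = [\Gamma(1/2)^n/2^0]^{2/n} = \pi$), which yields the stated formula.

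I do not expect any substantive obstacle: the only mild care needed is justifying the Fubini exchange (immediate from positivity) and tracking the constants from Lemma \ref{Lemma:Gaussian}. An alternative route is to pass to polar-like coordinates $x = r\omega$ with $\omega \in \partial B_1^A$, use $x^A\, dx = r^{D-1}\omega^A\, dr\, dS_\omega$, perform the angular integral via $P(B_1^A) = D\, m(B_1^A) = 2\Pi(A)^{D/2}/\Gamma(D/2)$, and evaluate the radial integral $\int_0^\infty r^{D-1}(1+\sigma r^2)^{-\beta}\, dr$ by the substitution $u = \sigma r^2$ as a Beta function $\tfrac{1}{2}\sigma^{-D/2} B(D/2, \beta - D/2)$; both routes give the same constant and either one can be presented.
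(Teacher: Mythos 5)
Your main argument is correct, and it takes a genuinely different route from the paper's. The paper proves \eqref{int cauchy bis} by the same ``polar coordinates'' decomposition used in Lemma \ref{Lemma:Gaussian}: writing $x = r\omega$, factoring out the angular integral $P(B_1^A) = D\, m(B_1^A) = D\,\Pi(A)^{D/2}/\Gamma(\tfrac{D}{2}+1)$, rescaling $s = \sqrt{\sigma}\, r$, and evaluating $\int_0^\infty s^{D-1}(1+s^2)^{-\beta}\, ds$ as the Beta value $\Gamma(\tfrac{D}{2})\Gamma(\beta-\tfrac{D}{2})/(2\Gamma(\beta))$ --- this is exactly the ``alternative route'' you sketch at the end, with matching constants. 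Your primary route instead uses the subordination identity
\[
	\frac{1}{(1+\sigma|x|^2)^{\beta}} = \frac{1}{\Gamma(\beta)}\int_0^\infty t^{\beta-1} e^{-t(1+\sigma|x|^2)}\, dt,
\]
Tonelli (justified by positivity), and Lemma \ref{Lemma:Gaussian} with $\al = 2$ (where the Gamma factors indeed cancel, leaving $(t\sigma)^{-D/2}\Pi(A)^{D/2}$), reducing everything to the one-dimensional integral $\int_0^\infty t^{\beta-1-D/2}e^{-t}\,dt = \Gamma(\beta-\tfrac{D}{2})$, convergent precisely when $\beta > \tfrac{D}{2}$. What your route buys is economy: it reuses Lemma \ref{Lemma:Gaussian} as a black box and never repeats the polar-coordinate bookkeeping, and it makes transparent that the Cauchy-type integral is a Gamma-weighted average of Gaussian ones (the ``pre-Gaussian'' structure the paper alludes to in the introduction). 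The paper's route is self-contained and runs exactly parallel to the proof of Lemma \ref{Lemma:Gaussian}, at the cost of redoing that decomposition. Your handling of \eqref{int cauchy} --- translation invariance plus the specialization $A=(0,\dots,0)$, $k=0$, $D=n$, $\Pi(A)=[\Gamma(\tfrac12)^n]^{2/n}=\pi$ --- is also correct, and is more explicit than the paper, which simply cites \cite{Bob} for that case.
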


\begin{proof}
For \eqref{int cauchy} we also refer to \cite{Bob}. 
We derive \eqref{int cauchy bis} by using the ``polar coordinates" again.
As in the proof of the former lemma, we compute
\begin{align*}
	\int_{\re^n_A} \frac{x^A dx}{(1 +\sigma |x|^2)^{\beta}} &= \int_{\omega \in \pd B_1^A} \int_0^{\infty} \frac{(r \omega)^A r^{n-1} dr dS_{\omega}}{(1 + \sigma r^2)^{\beta}} \\
	&= \( \int_{\omega \in \pd B_1^A} \omega^A dS_{\omega} \) \int_0^{\infty} \frac{r^{D-1}}{(1 + \sigma r^2)^{\beta}} dr \\
	&= P(B_1^A) \(\frac{1}{\sigma} \)^{\frac{D}{2}} \int_0^{\infty} \frac{s^{D-1}}{(1 + s^2)^{\beta}} ds \\
	&= D \frac{\Pi(A)^{\frac{D}{2}}}{\Gamma(\frac{D}{2}+1)} \(\frac{1}{\sigma} \)^{\frac{D}{2}} \frac{\Gamma(\frac{D}{2}) \Gamma(\beta - \frac{D}{2})}{2\Gamma(\beta)} \\
	&= \(\frac{\Pi(A)}{\sigma} \)^{\frac{D}{2}} \frac{\Gamma(\beta - \frac{D}{2})}{\Gamma(\beta)}.
\end{align*}
\end{proof}

\subsection{Proof of Theorem \ref{Theorem:log-Sobolev}}

By density argument it is enough to prove our results for functions $f\in C_c^1(\re^n)$.

Let $N \in \N$ and let $B = (B_1, \dots, B_N) \in \re^N$ be a nonnegative vector.
Let us take $F \in C_c^1(\re^N)$ with $\int_{\re^N_B} |F(z)|^2 z^B dz = 1$ and let us denote $D_B = N + B_1 + \cdots + B_N$ and $2_*(B) = \frac{2D_B}{D_B-2}$.
Since the sharp $L^2$-Sobolev inequality with monomial weights \eqref{Sobolev weight} yields that for $D_B>2$
\begin{equation}
\label{sobolev F}
	\!\!\( \!\int_{\re^N_B} |F(z)|^{2_*(B)} z^B dz \!\)^{1/2_*(B)} \!\!\!\le\! C_{2,N,B} \(\! \int_{\re^N_B} |\nabla F(z)|^2 z^B dz \!\)^{1/2},
\end{equation}
where
\begin{equation}
\label{C2NB}
\!\!	C_{2,N,B} \!= \! D_B^{-\frac{1}{2}-\frac{1}{D_B}} \!\(\! \frac{\prod_{i=1}^N \Gamma(\frac{B_i+1}{2})}{2^{k_B}\Gamma(1 + \frac{D_B}{2})} \!\)^{-\frac{1}{D_B}}\!
\!\(\! \frac{1}{D_B-2} \!\)^{\frac{1}{2}} \!\(\! \frac{2\Gamma(D_B)}{\left[\Gamma(\frac{D_B}{2})\right]^2} \!\)^{\frac{1}{D_B}}\!\!
\end{equation}
and $k_B = \sharp \{ i \in \{ 1, \dots, N \} \, : \, B_i > 0 \}$, then we obtain
\begin{equation}
\label{L2}
	\!\!\log \!\( \!\int_{\re^N_B} |F(z)|^{2_*(B)-2} |F(z)|^{2}z^B dz \!\)^{\!\frac{1}{2_*(B)-2}}\!\!\! \le \! \log \!\left(\! C_{2,N,B}^2 \int_{\re^N_B} |\nabla F(z)|^2 z^B dz \!\right)^{\!\frac{D_B}{4}}\!\!\!.\!\!
\end{equation}

Let us consider a nonnegative vector $A = (A_1, \dots, A_n) \in \re^n$. We express $N = l n$ for $l \in \N$.
For a function $f \in C_c^1(\re^n)$ satisfying $\int_{\re^n_A} |f(x)|^2 x^A dx = 1$,
we put
\begin{equation}\label{F=}
	B = (\underbrace{A, A, \dots, A}_{l}) \in \re^{l n} = \re^N \quad \text{ and }\quad F(z) = \prod_{i=1}^l f(x^i),
\end{equation}
where $x^i = (x^i_1, \dots, x^i_n) \in \re^n$ for each $i = 1,2,\dots, l$,
and $z = (x^1, x^2, \cdots, x^l) \in \re^{ln} = \re^N$.
Note that for $z = (z_1, \dots, z_N) = (x^1, x^2, \dots, x^l) \in \re^N$ and $B = (A, A, \dots, A) \in \re^N$ as above,
we have the product structure of the space
\[
	\re^N_B = \re^n_A \times \dots \times \re^n_A
\]
and of the weight
$$
	z^B=(x^1)^A \times \dots \times (x^l)^A = \prod_{i=1}^l (x^i)^A.
$$
Moreover we stress that
$$D_B = N + B_1 + \dots + B_N = ln + l(A_1 + \dots + A_n) = l D,$$
where $D = n + A_1 + \dots + A_n$.
Under these notations, we have the next relations:
\begin{lemma}
\label{Prop:product}
Under the assumptions of this subsection we have
\begin{align*}
	&\int_{\re^N_B} |F(z)|^p z^B dz = \prod_{i=1}^l \int_{\re^n_A} |f(x^i)|^p (x^i)^A dx^i  \text{ for } p\geq1, \\
	&\int_{\re^N_B} |\nabla F(z)|^2 z^B dz = l \int_{\re^n_A} |\nabla f(x)|^2 x^A dx.
\end{align*}
\end{lemma}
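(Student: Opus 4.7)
The plan is to exploit the three product structures at play---the domain $\re^N_B = (\re^n_A)^l$, the weight $z^B = \prod_{i=1}^l (x^i)^A$, and the test function $F(z) = \prod_{i=1}^l f(x^i)$---and reduce both identities to Fubini's theorem. The normalization $\int_{\re^n_A} |f|^2 x^A dx = 1$ will play a role only in the second identity.

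For the first identity, I would observe that
\[
	|F(z)|^p z^B = \prod_{i=1}^l |f(x^i)|^p (x^i)^A,
\]
and since $dz = dx^1 \cdots dx^l$ and each factor is a nonnegative integrable function of a single block $x^i$, Fubini's theorem separates the integral into the claimed product. This step is routine; the only thing to check is that $F \in C_c^1(\re^N)$ so that integrability on each block holds.

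For the second (and main) identity, I would compute the gradient block by block. Since $F(z) = \prod_{i=1}^l f(x^i)$, only the factor $f(x^i)$ depends on $x^i$, so
\[
	\nabla_{x^i} F(z) = \nabla f(x^i) \prod_{j \neq i} f(x^j),
\]
and therefore
\[
	|\nabla F(z)|^2 = \sum_{i=1}^l |\nabla f(x^i)|^2 \prod_{j\neq i} |f(x^j)|^2.
\]
Multiplying by $z^B = \prod_i (x^i)^A$ and integrating over $\re^N_B$, Fubini's theorem gives
\[
	\int_{\re^N_B} |\nabla F|^2 z^B dz = \sum_{i=1}^l \left( \int_{\re^n_A} |\nabla f(x^i)|^2 (x^i)^A dx^i \right) \prod_{j\neq i} \int_{\re^n_A} |f(x^j)|^2 (x^j)^A dx^j.
\]
At this point the normalization $\int_{\re^n_A} |f|^2 x^A dx = 1$ kills each of the $l-1$ factors in the product, and each of the $l$ summands collapses to $\int_{\re^n_A} |\nabla f|^2 x^A dx$, giving the prefactor $l$.

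There is no real obstacle here; the content of the lemma is precisely that the tensor product structure is compatible with both the Lebesgue norm and the Dirichlet energy on the weighted space. The only minor point worth flagging is that the weight $z^B$ is naturally a tensor product because we have chosen $B = (A, A, \ldots, A)$, so $D_B = l D$ and $k_B = l k$ as well, which is what makes the asymptotic analysis $l \to \infty$ in the forthcoming proof of the logarithmic Sobolev inequality work cleanly.
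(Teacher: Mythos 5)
Your proof is correct: the paper itself omits the proof of this lemma, stating only that it ``follows by a direct computation,'' and your argument --- Tonelli/Fubini on the tensor-product structure for the first identity, block-by-block differentiation plus the normalization $\int_{\re^n_A}|f|^2 x^A\,dx=1$ to collapse the $l-1$ spare factors in the second --- is exactly that computation. Nothing is missing.
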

The proof of this Lemma follows by a direct computation, so we omit it.
By this lemma, we have $\int_{\re^N_B} |F(z)|^2 z^B dz = 1$ for $F$ in \eqref{F=}.

\vspace{1em}
By Lemma \ref{Prop:product}, the inequality \eqref{L2} becomes
\begin{equation}\label{51}
l\log \( \int_{\re^n_A} |f(x)|^{2_*(B)-2} |f(x)|^{2}x^A dx \)^{\frac{1}{2_*(B)-2}} \le \frac{lD}{4} \log \left( l C_{2,N,B}^2 \int_{\re^n_A} |\nabla f(x)|^2 x^A dx \right)
\end{equation}
Now, by \eqref{C2NB} with $N = ln$ and $D_B = l D$ it follows that
$$
	l C_{2,N,B}^2=  \frac{1}{D}  \left[ \frac{2^k}{\prod_{i=1}^n \Gamma(\frac{A_i+1}{2})} \right]^{2/D} \frac{1}{lD-2} \left[ \frac{\Gamma(l D)}{\Gamma(\frac{l D}{2})} \right]^{\frac{2}{l D}}.
$$
Let $l \to \infty$ in the above equality. 
Stirling's formula
\begin{equation}\label{Stirling}
	\Gamma(s+1) = [1 + o(1)] \(s^s e^{-s} \sqrt{2\pi s} \) \quad \text{as} \quad s \to \infty,
\end{equation}
implies that
\begin{equation}\label{C l}
	\lim_{l \to \infty} l C_{2, ln, B}^2 = \frac{2}{eD} \left[ \frac{2^k}{\prod_{i=1}^n \Gamma(\frac{A_i+1}{2})} \right]^{2/D} = \frac{2}{\Pi(A)eD}.
\end{equation}
Now we apply Lemma \ref{Lemma0} with $g(x) = f(x)$, $d\mu = |f(x)|^2 x^A dx$, $X=\re^n_A$ and $p = 2_*(B) - 2 \to 0$ as $l \to \infty$.
Note that by the $L^2$-Sobolev inequality with monomial weight \eqref{Sobolev weight}, $\int_X |g|^{2_*(A)-2} d\mu < \infty$, thus we may take $r = 2_*(A) - 2$ in Lemma \ref{Lemma0}.   
Then Theorem \ref{Theorem:log-Sobolev} follows by taking a limit $l \to \infty$ in \eqref{51} with \eqref{C l}.

By Lemma \ref{Lemma:Gaussian} with $\alpha = 2$ and $t = 1/2$, we easily check that the equality in \eqref{log-Sobolev} holds for $f(x)=\frac{e^{-\frac{|x|^2}{4}}}{\left(2\Pi(A)\right)^{\frac{D}{4}}}$,
which satisfies that $\int_{\re^n_A} |f|^2 x^A\,dx=1$ and $\int_{\re^n_A} |x|^2|f|^2 x^A\,dx=D$.
\qed

\bigskip

\begin{remark}
We stress that it is the sharp asymptotics rather than the precise form of the Sobolev embedding constant that determines the Logarithmic Sobolev inequality. 
Indeed the constant in Theorem \ref{Theorem:Sobolev} is such that $C_{2,ln,B}\sim l^{-\frac{1}{2}}$ (up to a constant) as $l\rightarrow +\infty$, \textit{i.e.}, \eqref{C l}.
\end{remark}

\begin{remark}
The original proof by Beckner and Pearson \cite{Beckner-Pearson} used Jensen's inequality instead of Lemma \ref{Lemma0}. 
It works also in our framework.
Indeed Jensen's inequality implies that
\begin{align}
\label{L1}
	&\log \int_{\re^N_B} |F(z)|^{2_*(B)} z^B dz = \log \int_{\re^N_B} |F(z)|^{2_*(B)-2} |F(z)|^2 z^B dz \notag \\
	&\ge \frac{2_*(B)-2}{2} \int_{\re^N_B} |F(z)|^2 (\log |F(z)|^2 ) z^B dz.
\end{align}
For $F$ given by \eqref{F=}, we easily see that 
\begin{equation}\label{log l}
	\int_{\re^N_B} |F(z)|^2 (\log |F(z)|^2 ) z^B dz = l \int_{\re^n_A} |f(x)|^2 (\log |f(x)|^2 ) x^A dx.
\end{equation}
Recalling Lemma \ref{Prop:product} and combining \eqref{L1}, \eqref{log l}, and \eqref{L2}, we obtain
\begin{equation*}
	l \int_{\re^n_A} |f(x)|^2 (\log |f(x)|^2 ) x^A dx  \le \frac{l D}{2} \log \left\{ l C_{2,N,B}^2 \int_{\re^n_A} |\nabla f(x)|^2 x_A dx \right\}.
\end{equation*}
Theorem \ref{Theorem:log-Sobolev} follows form the inequality above with \eqref{C l}. 
\end{remark}

\begin{remark}
We stress that the isoperimetric inequality \eqref{Dis Isop}, or equivalently \eqref{Sobolev weight} with $p=1$ implies \eqref{log-Sobolev}. 
Indeed, let $U:\re^N\rightarrow\re$ be such that $\int_{\re^N_B} |U(z)| \,z^B dz =1$.
Jensen's inequality and  \eqref{Sobolev weight} with $p=1$ imply that
$$
	\int_{\re^N_B} |U(z)| \log |U(z)| \, z^B dz \le D_B \log\left[ C_{1,N,B}\int_{\re^N_B} |\nabla U(z)| z^B dz\right].
$$
Taking $U(z)= F^2(z) = \prod_{i=1}^Nf^2(x^i)$ where $F$ is in \eqref{F=} with $\|f\|_{L^2(\re^n_A, x^A\,dx)}=1$, 
and using H\"{o}lder's inequality, Lemma \ref{Prop:product} and \eqref{log l}, 
we have
$$
	l \int_{\re^n_A} |f(x)|^2 (\log |f(x)|^2 ) x^A dx  \le \frac{{l D}}{2}  \log \left[ 4 C^2_{1,N,B} l \int_{\re^n_A} |\nabla f(x)|^2 x_A dx \right].
$$
By Stirling formula \eqref{Stirling}, the inequality \eqref{log-Sobolev} follows.
\end{remark}

\subsection{Proof of Theorem \ref{Theorem:equality}}

We use the same notation introduced in the previous subsection.

It is easy to check that functions defined in \eqref{Equality} gives the equality in the Logarithmic Sobolev inequality. 
We want to prove that they are the only one. 
In order to do this we characterize the equality cases in every inequality in the proof of Theorem \ref{Theorem:log-Sobolev}. 
Without loss of generality we may consider only positive functions. 
For simplicity, first we consider the case $B=(0,..,0) \in \re^N$, \textit{i.e.} $D_B=N = nl$ and no weight is considered. 
Recall that extremals in the classical Sobolev inequality are all given by \eqref{classic equality}, see \cite{talenti best}. 
In order to have the equality for $F(z) = F_l(z)$ in \eqref{sobolev F},
it is necessary that, as $l \to \infty$,
$$
	F_l(z)\sim  a_l(1 + b_l|z-z_0^l|^{2})^{1-\frac{nl}{2}} \quad \hbox{ for every} z,\in\mathbb{R}^N
$$
with $a_l>0$, $b_l>0$, and $z_0^l \in \re^n$.
Here we have used the notation which emphasizes the dependence on $l$ of involved functions and constants.
Also we use the notation $\alpha_l \sim \beta_l$ if $\lim_{l \to \infty} \frac{\alpha_l}{\beta_l} = 1$.
By translation invariance, we may fix $z_0^l = z_0$ for a fixed point $z_0 \in \re^N$.
Also recalling that we consider function with $\int_{\re^N} |F_l(z)|^2 dz = 1$, by \eqref{int cauchy} we see that $a_l>0$ is related with $b_l$ as
\begin{align*}
	a_l^{-2}=\(b_l\)^{-\frac{ln}{2}}{\pi}^{\frac{nl}{2}}\frac{\Gamma\left(nl-2-\frac{nl}{2}\right)}{\Gamma(nl-2)}
\end{align*}
for $nl > 1$.
Stirling formula \eqref{Stirling} yields
\begin{align*}
	a_l^{\frac{1}{l}}\sim  \left(\frac{2b_lln}{\pi e}\right)^{\frac{n}{4}} \quad \hbox{ as } l\rightarrow+\infty.
\end{align*}
Choose  
$$
	z=(x,\underbrace{x_0,\cdots,x_0}_{l-1}), \quad z_0=(x_0,x_0,\cdots,x_0) \quad \text{with } x,x_0\in\re^n.
$$
Then it follows that
$$
	f_l(x)\sim e^{\frac{n}{4}\log\frac{2b_lln}{\pi e}-\frac{nl}{2}\log(1 +b_l|x-x_0|^{2})} \quad \hbox{ as } l\rightarrow+\infty,
$$
where $f_l$ is such that $F_l(z) = \prod_{i=1}^l f_l(x^i)$, $z = (x^1, \cdots, x^l) \in \re^{nl}$. 
Note that $(f_l(x_0))^l = F_l(z_0) \sim a_l$ as $l \to +\infty$.
We have three possible behaviors of the sequence $b_l$ as $l \to \infty$:
\begin{itemize}
\item [i)] $b_l\rightarrow+\infty,$
\item [ii)] $b_l\rightarrow \overline{b}\in \mathbb{R}-\{0\},$
\item [iii)]$b_l\rightarrow 0.$
\end{itemize}
Indeed if the limit does not exist, then we can argue one of the previous cases up to a subsequence.
The only non-trivial case to be considered is the third one, since the case i) or the case ii) occurs, 
then $f_l \sim 0$ as $l \to \infty$ which is absurd by the restriction $\int_{\re^n} |f_l(x)|^2 dx = 1$.
When iii) occurs, we have
$$
	f_l(x)\sim e^{\frac{n}{4}\log\frac{2b_lln}{\pi e}-\frac{nl}{2}b_l|x-x_0|^{2}}\quad \hbox{ as } l\rightarrow+\infty.
$$
Again three cases (up to a subsequence) are possible for the behaviors of the sequence $lb_l$,
but the only one to be considered is $lb_l\rightarrow \widetilde{b}\in \mathbb{R}-\{0\}$ (otherwise $f_l \sim 0$). 
Under this assumptions
$$
	f_l(x) \sim  \(\frac{2\widetilde{b}n}{e\pi}\)^{\frac{n}{4}} e^{-\frac{n\widetilde{b} }{2}|x-x_0|^{2}}\quad \hbox{ as } l\rightarrow+\infty
$$
up to a constant.
By $L^2$-normalization the characterization of the equality follows.

\medskip

A slight modification of this proof works well when monomial weights are taken into account.
In this case, since $z_0^l = 0$ for all $l \in \N$, the situation is simpler.
However as noticed before, we need the additional assumption that $A_i \in \N \cup \{ 0 \}$ for all $i \in \{ 1,\cdots, n \}$, 
in order to assure that all extremal functions of the Sobolev inequality with monomial weights \eqref{Sobolev weight} are of the form \eqref{estr sobolev}.
\qed

\subsection{Proof of Corollary \ref{Corollary:Nash}}
We follow the ``geometric" argument by Beckner \cite{Beckner2}.
We will derive the desired inequality \eqref{Nash} from Jensen's inequality and the Logarithmic Sobolev inequality with monomial weights.
As before, it is enough to prove Corollary for $f \in C_c^1(\re^n)$ by density.

Let $f \in C_c^1(\re^n)$ satisfying
\begin{itemize}
\item [i)] $f\not\equiv 0$ on any subset of $\re^n_A$ with positive measure,
\item [ii)] $\int_{\re^n_A} |f|^2 x^A dx = 1$.
\end{itemize}
By Jensen's inequality and the Logarithmic Sobolev inequality \eqref{log-Sobolev},
we have
\begin{align*}
	&-\log \( \int_{\re^n_A} |f| x^A dx \) = -\log \( \int_{\re^n_A} \frac{1}{|f|} |f|^2 x^A dx \) \\
	&\le \int_{\re^n_A} |f|^2 \log |f| x^A dx \le \frac{D}{4} \log \( \frac{2}{\Pi(A)eD} \int_{\re^n_A} |\nabla f|^2 x^A dx \).
\end{align*}
Thus by the monotonicity of $\log$-function, we get
\[
	1 \le\frac{2}{\Pi(A)eD} \( \int_{\re^n_A} |\nabla f|^2 x^A dx \) \( \int_{\re^n_A} |f| x^A dx \)^{\frac{4}{D}}.
\]
By homogeneity we get \eqref{Nash}. 
To avoid the assumption $i)$ it is enough to integrate on $\re^n_A\backslash\{f=0\}$ and observe that $\int_{\re^n_A\backslash\{f=0\}} |f|^2 x^A dx=\int_{\re^n_A} |f|^2 x^A dx=1$.
\qed

\begin{remark}
As observed in $\S3$ of \cite{Beckner3} the homogeneity and the dilation invariance of \eqref{Nash} allow us to use the convexity of the function $G_f(p)$ 
defined by $G_f(p) = \log \int_{\re^n_A} |f(x)|^p x^A \, dx$.
Indeed, for $f$ with $\|f\|_{L^2(\re^n_A, x^A dx)}=1$, define $f_{\lambda}(x) = \la^{D/2} f(\la x)$ with $\la^{D/2} =  \|f\|_{L^1(\re^n_A, x^A dx)}$.
Then we see $\|f_{\la}\|_{L^2(\re^n_A, x^A dx)}= \|f_{\la}\|_{L^1(\re^n_A, x^A dx)}=1$, and we have 
$$
	G_{f_{\la}}'(p) \Big |_{p=2} = \int_{\re^n_A} |f_{\la}|^2 (\log |f_{\la}|) x^A dx\geq0. 
$$ 
Then \eqref{Nash} follows from \eqref{log-Sobolev} for $f_{\la}$.
\end{remark}

\subsection{Proof of Theorem \ref{Theorem:Shannon}}

We give a proof of Theorem \ref{Theorem:Shannon} along the line of \cite{Ogawa-Seraku}.
It is enough to prove \eqref{Shannon} for any $f \in L^1_{\al}(\re^n, x^A dx)$ with $\int_{\re^n_A} |f| x^A dx = 1$,
since \eqref{L^2-Shannon} is derived by putting $\al = 2$ and $|f|^2$ in \eqref{Shannon} instead of $|f|$.

For $\al > 0$, put $\phi_{\al}(x) = e^{-C |x|^\al}$, where $C=C_A(\al),$ defined as in \eqref{C_A},
is chosen so that $\int_{\re^n_A} \phi_{\al}(x) x^A dx = 1$ (see Lemma \ref{Lemma:Gaussian}).
For $f \in L^1_{\al}(\re^n, x^A dx)$ with $\int_{\re^n_A} |f| x^A dx = 1$,
Jensen' s inequality implies that
\begin{align*}
\exp\left( \int_{\re^n_A} |f| \log (\phi_{\al}) x^A dx\right. &- \left. \int_{\re^n_A} |f| \log |f| x^A dx \right)
	\\
&= \exp \( \int_{\re^n_A} |f| \log \(\frac{\phi_{\al}}{|f|}\) x^A dx \) \\
	&\le \int_{\re^n_A} \exp \(\log \(\frac{\phi_{\al}}{|f|}\) \) |f| x^A dx
\\
	&= \int_{\re^n_A} \phi_{\al}\, x^A \,dx = 1.
\end{align*}
From this, we have
\begin{align*}
	\int_{\re^n_A} |f| \log \phi_{\al}\, x^A \,dx \le \int_{\re^n_A} |f| \log |f|\, x^A \,dx.
\end{align*}
Thus we obtain
\begin{equation}\label{S1}
	-\int_{\re^n_A} |f| \log |f|\, x^A\, dx \le -\int_{\re^n_A} |f| \log \phi_{\al}\, x^A\, dx
	= C_A(\al) \int_{\re^n_A} |f|\, |x|^{\al}\, x^A \,dx.
\end{equation}

Now, put $f_{\la}(x) = \la^D f(\la x)$ for $\la > 0$.
It is easy to check that
\[
	\int_{\re^n_A} |f_{\la}(x)| \,x^A \,dx = \int_{\re^n_A} |f(y)|\, y^A \,dy = 1.
\]
We insert $f_{\la}$ instead of $f$ into \eqref{S1}.
Since
\[
	-\int_{\re^n_A} |f_{\la}| \log |f_{\la}|\, x^A\, dx =  (-D \log \la) \int_{\re^n_A} |f(y)| \,y^A \,dy - \int_{\re^n_A} |f| \log |f|\, y^A \,dy
\]
and
\[
	\int_{\re^n_A} |f_{\la}(x)| |x|^{\al}\, x^A \,dx = \la^{-\al} \int_{\re^n_A} |f(y)| \,|y|^{\al} \,y^A \,dy,
\]
we have
\[
	(-D \log \la) \int_{\re^n_A} |f| \, y^A \,dy - \int_{\re^n_A} |f| \log |f|\, y^A\, dy \le C_A(\al) \la^{-\al} \int_{\re^n_A} |f(y)| \,|y|^{\al} \,y^A\, dy.
\]
Since we assume $\int_{\re^n_A} |f(y)| y^A dy = 1$, we have
\begin{equation}
\label{S2}
	- \int_{\re^n_A} |f| \log |f| \, y^A \, dy \le C_A(\al) \la^{-\al} \int_{\re^n_A} |f(y)|\, |y|^{\al} \,y^A \, dy + D \log \la := G(\la).
\end{equation}
We optimize $G(\la)$ with respect to $\la > 0$.
Denoting $J(f) = \int_{\re^n_A} |f(y)| \,|y|^{\al} \,y^A \,dy$, then $G(\la) = C_A(\al) J(f) \la^{-\al} + D \log \la$ for $\la > 0$.
An easy computation shows that $G(\la)$ has the unique minimum point when $\la = \la_* = \( \frac{\al C_A(\al) J(f)}{D} \)^{1/\al}$,
and the global minimum value is
\[
	G(\la_*) = \frac{D}{\al} \log \( \frac{C_A(\al) \al e}{D} J(f) \).
\]
Returning to \eqref{S2}, we obtain the inequality
\[
	- \int_{\re^n_A} |f| \log |f|\, y^A \,dy \le G(\la_*) = \frac{D}{\al} \log \( \frac{C_A(\al) \al e}{D} \int_{\re^n_A} |f(y)| \,|y|^{\al} \,y^A \,dy \).
\]

Concerning the equality case, Lemma \ref{Lemma:Gaussian} implies that
\begin{align*}
	&\int_{\re^n_A} \phi_{\al}(x) \log (\phi_{\al}^{-1}(x)) x^A dx 
= \frac{D}{\al}
=\frac{D}{\al} \log \( \frac{C_A(\al) \al e}{D} \int_{\re^n_A} \phi_{\al}(x)  |x|^{\al} x^A dx \)
.
\end{align*}
Thus $\phi_{\al}$ realizes the equality in \eqref{Shannon}.
This completes the proof.
\qed

\section{Some remarks}

In this section, we discuss about several inequalities related to our former results. 
For other inequalities such as Hardy-Sobolev type or Trudinger-Moser type with monomial weights, see \cite{Castro} and \cite{Lam}.

\subsection{Inequalities on the whole space}

It is easy to check that Theorem \ref{Theorem:Sobolev} holds on the whole $\re^n$ without the best constant.

\begin{corollary}
\label{Theorem:Sobolev W1p}
Let $A,D$ and $p_*$ be as in Theorem \ref{Theorem:Sobolev}.Then
\[
	\( \int_{\re^n} x^A |u|^{p_{*}} dx \)^{1/p_{*}} \le K \( \int_{\re^n} x^A |\nabla u|^p dx \)^{1/p}
\]
holds true for any $u \in W^{1,p}_0(\re^n, x^A dx)$, where
$$
	K=\left\{
	\begin{array}
	[c]{ll}%
	2^{\frac{1}{D}}C_{p,n,A}  &\quad \mbox{ if } \quad 1 \le p < D/2,
	\\
	C_p & \quad
	\mbox{ if }  \quad D/2 \le p < D.
	\end{array}
	\right.  
$$
\end{corollary}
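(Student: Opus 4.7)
The plan is to reduce the inequality over $\re^n$ to the inequality over $\re^n_A$ supplied by Theorem \ref{Theorem:Sobolev}, exploiting the fact that the monomial weight $x^A = \prod_{i=1}^n |x_i|^{A_i}$ is invariant under each coordinate sign-flip $x_i \mapsto -x_i$. By density it suffices to work with $u \in C_c^1(\re^n)$.

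First I would partition $\re^n$ into the $2^k$ ``signed orthants'' $\{Q_\sigma\}$ indexed by $\sigma \in \Sigma := \{(\sigma_1,\dots,\sigma_n) \in \{-1,+1\}^n : \sigma_i = 1 \text{ whenever } A_i = 0\}$, where $Q_\sigma := \{x \in \re^n : \sigma_i x_i > 0 \text{ for every } i \text{ with } A_i > 0\}$. Thus $\re^n_A = Q_{(1,\dots,1)}$, and the map $\Phi_\sigma(x) := (\sigma_1 x_1, \dots, \sigma_n x_n)$ sends $\re^n_A$ bijectively onto $Q_\sigma$ while preserving both Lebesgue measure and the weight $x^A$, since $|\sigma_i x_i|^{A_i} = |x_i|^{A_i}$.

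Next, for each $\sigma \in \Sigma$ I would set $u_\sigma(x) := u(\Phi_\sigma(x))$, which still lies in $C_c^1(\re^n) \subset W^{1,p}_0(\re^n, x^A dx)$. Applying Theorem \ref{Theorem:Sobolev} to $u_\sigma$ and then changing variables via $y = \Phi_\sigma(x)$ in both integrals produces the localized estimate
\[
	\( \int_{Q_\sigma} |u|^{p_*} x^A dx \)^{1/p_*} \le C_{p,n,A} \( \int_{Q_\sigma} |\nabla u|^p x^A dx \)^{1/p}
\]
on each orthant.

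Finally I would raise both sides to the $p_*$-th power, sum over $\sigma \in \Sigma$, and apply the elementary inequality $\sum_\sigma a_\sigma^{p_*/p} \le (\sum_\sigma a_\sigma)^{p_*/p}$, valid for $a_\sigma \ge 0$ since $p_*/p \ge 1$. The left-hand sides assemble into $\int_{\re^n} |u|^{p_*} x^A dx$, while the right-hand sides collapse into $C_{p,n,A}^{p_*} \( \int_{\re^n} |\nabla u|^p x^A dx \)^{p_*/p}$. Taking the $p_*$-th root yields the Sobolev inequality on $\re^n$ with constant $K = C_{p,n,A}$, which is no larger than either of the two constants displayed in the corollary. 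I do not anticipate any serious obstacle; the only point requiring care is the verification that $\Phi_\sigma$ preserves $dx$ and $x^A$, which is immediate from the evenness of the weight in each coordinate.
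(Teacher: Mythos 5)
Your proposal is correct and follows essentially the same route as the paper: split $\re^n$ into the $2^k$ hyperoctants on which the weight is a reflected copy of its restriction to $\re^n_A$, apply Theorem \ref{Theorem:Sobolev} on each piece (your explicit use of the sign-flip maps $\Phi_\sigma$ is just a careful justification of the paper's remark that ``each $Q_i$ is a copy of $\re^n_A$''), and recombine with a power-sum inequality. The one place where you diverge is the final elementary step, and there your argument is in fact sharper than the paper's: the superadditivity $\sum_\sigma a_\sigma^q \le \(\sum_\sigma a_\sigma\)^q$ holds for \emph{every} $q \ge 1$ (normalize $\sum_\sigma a_\sigma = 1$; then each $a_\sigma \le 1$, so $a_\sigma^q \le a_\sigma$), not only for $q \ge 2$ as the paper's case analysis suggests. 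For $1 < q = p_*/p < 2$, i.e. $1 \le p < D/2$, the paper instead invokes the weaker bound $a^q + b^q \le 2^{q-1}(a+b)^q$, and that factor $2^{q-1}$ is exactly what produces the constant $2^{\frac{1}{D}}C_{p,n,A}$, since $(q-1)/p_* = 1/D$. Your argument therefore yields $K = C_{p,n,A}$ on the whole range $1 \le p < D$; since this is no larger than either constant displayed in the statement, the corollary follows a fortiori, and the case distinction in $K$ is an artifact of the paper's estimate rather than a feature of the result.
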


\begin{proof}
Let $k = \sharp \{ i \in \{ 1, \dots, n \} \, : \, A_i > 0 \}$.
We apply \eqref{Sobolev weight} for $2^k$ hyperoctants $Q_i$ of $\re^n$, each $Q_i$ is a copy of $\re^n_A$ and $\cup_{i=1}^{2^k} Q_i = \re^n \setminus \cup_{i=1}^n \{\text{$x_i$-axis}\}$:
 \[
	 \int_{\re^n} x^A |u|^{p_{*}} dx =  \sum_{i=1}^{2^k}\int_{Q_i} x^A |u|^{p_{*}} dx \leq C^{p_{*}}_{p,n,A} \sum_{i=1}^{2^k} \( \int_{Q_i} x^A |\nabla u|^p dx \)^{p_{*}/p}.
\]
The assertion follows by observing that
$$
	a^q+b^q\leq (a+b)^q\, \hbox{ if } q\geq2 \, \hbox{ for  } a,b>0
$$
and
$$
	a^q+b^q\leq 2^{q-1}(a+b)^q\, \hbox{ if } 1<q<2 \, \hbox{ for  } a,b>0
$$
with $q = \frac{p_*}{p} = \frac{D}{D-p}$.
\end{proof}

We can also derive a Logarithmic Sobolev inequality \eqref{log-Sobolev} (not in sharp form) and a Nash type inequality 
when the domain of integration is whole $\re^n$, starting from Corollary \ref{Theorem:Sobolev W1p} with $p=2$ instead of Theorem \ref{Theorem:Sobolev}.

\subsection{$L^p$-Logarithmic Sobolev inequalities}

Using the $L^p$-Sobolev inequality it is possible to derive (in general not sharp) $L^p$-version of Logarithmic Sobolev inequality.
For $p=1$ we obtain a sharp inequality.

\begin{proposition}
\label{Theorem:log-Sobolev 1}
Let $A$ and $D$ be as in Theorem \ref{Theorem:log-Sobolev}.
The following inequality
\begin{equation}
\label{log-Sobolev 1}
	\int_{\re^n_A} |f| (\log |f|) x^A dx \le D \log \( K \int_{\re^n_A} |\nabla f| x^A dx \)
\end{equation}
holds true for any $f \in W^{1,1}_0(\re^n, x^A dx)$ such that $\int_{\re^n_A} |f| x^A dx = 1$, 
with the sharp constant given by $K= C_{1,n,A}$ defined as in \eqref{C1}.
Also no function in $W^{1,1}_0(\re^n, x^A dx)$ achieves the equality in \eqref{log-Sobolev 1}.
\end{proposition}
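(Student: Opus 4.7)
The plan is to derive \eqref{log-Sobolev 1} directly from the sharp $L^{1}$-Sobolev inequality \eqref{Sobolev weight} with $p=1$ by applying Jensen's inequality on a single copy of $\re^n_A$; no tensorization and no passage to a limit are needed. This is essentially the special case $l=1$ of the chain of inequalities used in the Remark following Theorem \ref{Theorem:log-Sobolev}, isolated to yield the sharp $L^{1}$ version.

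First I would apply \eqref{Sobolev weight} with $p=1$ to $f \in W^{1,1}_0(\re^n, x^A dx)$ normalized so that $\int_{\re^n_A} |f|\, x^A\,dx = 1$. Raising to the $1_*$-th power with $1_* = D/(D-1)$ and taking logarithms,
\[
\log \int_{\re^n_A} |f|^{\frac{D}{D-1}} x^A\,dx \le \frac{D}{D-1}\log\!\( C_{1,n,A} \int_{\re^n_A} |\nabla f|\,x^A\,dx \).
\]
Next, I would split $|f|^{D/(D-1)} = |f|^{1/(D-1)}\cdot |f|$ and treat $d\mu := |f|\,x^A\,dx$ as a probability measure on $\re^n_A$. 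Jensen's inequality applied to the convex function $\exp$ (equivalently, the concavity of $\log$) gives
\[
\int_{\re^n_A} |f|^{\frac{D}{D-1}}\, x^A\,dx = \int_{\re^n_A} \exp\!\( \tfrac{1}{D-1}\log|f| \)\, d\mu \ge \exp\!\( \tfrac{1}{D-1} \int_{\re^n_A} |f|\log|f|\, x^A\,dx \).
\]
Taking the logarithm of both sides, combining with the previous display, and multiplying through by $D-1$ yields \eqref{log-Sobolev 1} with $K = C_{1,n,A}$.

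For the non-attainment claim, I would argue that equality in the above chain requires simultaneously equality in Jensen's inequality (which forces $|f|$ to be constant on the support of $d\mu$, i.e., a positive multiple of a characteristic function) and equality in the $L^{1}$-Sobolev inequality. Neither is possible in $W^{1,1}_0(\re^n, x^A\,dx)$: the former produces a function with a jump set (not of Sobolev class), and Theorem \ref{Theorem:Sobolev} already records that $C_{1,n,A}$ is never attained in $W^{1,1}_0$. Sharpness of $K = C_{1,n,A}$ is inherited from that of \eqref{Sobolev weight}: testing \eqref{log-Sobolev 1} along a sequence of mollified, suitably rescaled characteristic functions of the Wulff set $B_1^A$ and using the isoperimetric identity $P(B_1^A)/m(B_1^A)^{(D-1)/D} = C_{1,n,A}^{-1}$, one checks that the entropy and the weighted total-variation terms match at leading order.

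\emph{Main obstacle.} There is essentially no obstacle for the inequality itself; it is a two-line consequence of sharp $L^{1}$-Sobolev and Jensen. The only genuine care lies in the asymptotic matching used to confirm that $K=C_{1,n,A}$ is the best constant, where one must quantify how the mollification error enters both $\int|f|\log|f|\,x^A dx$ and $\int|\nabla f|\,x^A dx$; this is of exactly the same nature as the standard (but slightly delicate) verification that $C_{1,n,A}$ is sharp yet not attained in the weighted $L^{1}$-Sobolev inequality.
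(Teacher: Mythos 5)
Your proof of the inequality itself is essentially the paper's own: the paper's argument is precisely ``combining Jensen's inequality and \eqref{Sobolev weight} with $p=1$'' (splitting $|f|^{D/(D-1)}=|f|^{1/(D-1)}\cdot|f|$ against the probability measure $|f|\,x^A dx$), and your sharpness argument via mollified characteristic functions of $B_1^A$ together with the isoperimetric identity \eqref{Dis Isop} is also the paper's, which simply observes that $f = \chi_{B_1^A}/m(B_1^A)$ gives equality in \eqref{log-Sobolev 1} because $C_{1,n,A}^{-1}$ is the best isoperimetric constant. Your non-attainment argument (equality would force equality in Jensen, hence $|f|$ a normalized characteristic function, which is not a Sobolev function; and equality in the $L^1$-Sobolev inequality, which is never attained in $W^{1,1}_0$) is correct and matches the intended reasoning; note that the failure of either one of the two equality cases would already suffice.

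There is, however, one genuine (if small) gap: your argument silently assumes $D>1$. The exponent $1_* = D/(D-1)$ and the final multiplication by $D-1$ both require $D>1$, and \eqref{Sobolev weight} with $p=1$ itself requires $p<D$. The proposition as stated allows $D=1$, which occurs exactly when $n=1$ and $A_1=0$; in that case your chain of inequalities is vacuous. The paper treats this case separately by reducing to the one-dimensional unweighted inequality and invoking Theorem 2 of \cite{Beckner3}. To make your proof complete you would need to add this (or an equivalent) treatment of the case $D=1$.
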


When $D>1$, \eqref{log-Sobolev 1} is obtained by combining Jensen's inequality and \eqref{Sobolev weight} with $p=1$. 
Then as an easy consequence, we see $C_{1,n,A}$ is an upper bound for $K$. 
Let $\chi_{B^A_1}(x)$ be the characteristic function of $B_1^A$ defined as in \eqref{B1A}. 
Since  $C_{1,nA}^{-1}$ is the best constant of the isoperimetric inequality \eqref{Dis Isop}, 
$f(x)=\frac{1}{\int_{B_1^A}  x^A dx }\chi_{B^A_1}(x)$ gives the equality in \eqref{log-Sobolev 1}. 
It follows that the constant $C_{1,n,A}$ is sharp in \eqref{log-Sobolev 1}.
When $D=1$ we reduce to the case $n=1$ without weight and Proposition follows by Theorem 2 of \cite{Beckner3}, 
where the unweighted version of this result is proved.

\medskip

The above argument does not give the sharp result for $p\neq1$.

\begin{proposition}{\rm ($L^p$ Logarithmic Sobolev inequality with monomial weights)}
\label{Theorem:log-Sobolev p}
Let $A$ and $D$ as in Theorem \ref{Theorem:log-Sobolev} and $1< p<D$.
Then the following inequality
\[
	\int_{\re^n_A} |f|^p (\log |f|^p) x^A dx \le \frac{D}{p} \log \( C^p_{p,n,A} \int_{\re^n_A} |\nabla f|^p x^A dx \)
\]
holds true for any $f \in W^{1,p}_0(\re^n)$ such that $\int_{\re^n_A} |f|^p x^A dx = 1$, where
$C_{p,n,A}$ is defined as in \eqref{Cp}.
\end{proposition}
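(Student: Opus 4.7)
The plan is to imitate the strategy sketched for the $p=1$ case in Proposition \ref{Theorem:log-Sobolev 1}, namely combining Jensen's inequality with the sharp $L^p$-Sobolev inequality with monomial weights \eqref{Sobolev weight}. By the standard density argument it suffices to work with $f \in C_c^1(\re^n)$ normalized by $\int_{\re^n_A} |f|^p x^A dx = 1$, so that $|f|^p x^A dx$ is a probability measure on $\re^n_A$.

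First, I apply Jensen's inequality for the concave function $\log$ with respect to this probability measure, choosing the integrand $|f|^{p_*-p}$. Since
\[
	\int_{\re^n_A} |f|^{p_*-p} \cdot |f|^p x^A dx = \int_{\re^n_A} |f|^{p_*} x^A dx,
\]
Jensen yields
\[
	\log \int_{\re^n_A} |f|^{p_*} x^A dx \ge (p_*-p) \int_{\re^n_A} \log|f| \cdot |f|^p x^A dx = \frac{p_*-p}{p} \int_{\re^n_A} \log(|f|^p)\, |f|^p x^A dx.
\]
Next, the sharp $L^p$-Sobolev inequality \eqref{Sobolev weight} from Theorem \ref{Theorem:Sobolev}, after raising to the $p_*$-th power and taking logarithms, gives
\[
	\log \int_{\re^n_A} |f|^{p_*} x^A dx \le \frac{p_*}{p} \log\!\left( C_{p,n,A}^p \int_{\re^n_A} |\nabla f|^p x^A dx \right).
\]

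Combining these two inequalities, I obtain
\[
	\int_{\re^n_A} \log(|f|^p)\, |f|^p x^A dx \le \frac{p_*}{p_*-p} \log\!\left( C_{p,n,A}^p \int_{\re^n_A} |\nabla f|^p x^A dx \right),
\]
so the proof reduces to the elementary identity $\frac{p_*}{p_*-p} = \frac{D}{p}$, which follows at once from $p_* = \frac{Dp}{D-p}$ since $p_* - p = \frac{p^2}{D-p}$. This yields exactly the desired inequality.

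Since both steps are straightforward, there is no genuine obstacle. The only real subtlety — already foreshadowed by the statement saying the result is ``in general not sharp'' — is that unlike the $L^2$ case in Theorem \ref{Theorem:log-Sobolev}, we do \emph{not} exploit any asymptotic tensorization: Jensen's inequality is strict unless $|f|^{p_*-p}$ is constant $|f|^p x^A dx$-a.e., while the extremizers of \eqref{Sobolev weight} from \eqref{estr sobolev} do not have this property. Consequently the constant $C_{p,n,A}^p$ produced by this argument is not expected to be optimal for $1<p<D$, $p\ne 1$, consistent with the remark preceding the proposition.
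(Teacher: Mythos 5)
Your proof is correct and follows exactly the route the paper indicates: Jensen's inequality applied to the probability measure $|f|^p x^A\,dx$ with integrand $|f|^{p_*-p}$, combined with the sharp Sobolev inequality \eqref{Sobolev weight}, and the identity $\frac{p_*}{p_*-p}=\frac{D}{p}$. The paper only states this strategy in one line, so your write-up is a faithful (and complete) execution of the intended argument, including the correct observation about why the resulting constant is not sharp.
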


The proof of Proposition  \ref{Theorem:log-Sobolev p} runs again by combining Jensen's inequality and \eqref{Sobolev weight}.
Even in the unweighted case the constant is not sharp (see \cite{delPino-Dolbeault}). 
When $A\equiv(0,\cdots,0) \in \re^n$, sharp $L^p$ versions of \eqref{log-Sobolev(Weissler)} are proved in \cite{delPino-Dolbeault} and in \cite{Gentil03}.

As shown in $\S2$ of \cite{Beckner3}, 
even in the unweighted case the asymptotic behavior of the constant, the product structure of $\re^n$, and of the weight for $p\neq 2$ don't allow us to obtain sharp inequalities, 
but only
$$
	\int_{\re^n} |g|^p (\log |g|^p) x^A dx \le \frac{n}{2} \log \( \frac{p^2}{2\pi e n} \int_{\re^n} |\nabla g|^2 |g|^{p-2} x^A dx \)
$$
for $g$ such that $\|g\|_{L^p(\re^n)}=1$. 
We point out that the last inequality can be derived from \eqref{log-Sobolev} by setting $|f|=|g|^{p/2}$.

\subsection{Logarithmic Sobolev trace inequality with monomial weights}

In this subsection we obtain a Logarithmic Sobolev trace inequality with monomial weights (see \cite{FP2013}, \cite{Park} for similar inequalities without weights).

\begin{proposition}{\rm (Logarithmic Sobolev trace inequality with monomial weights)}
\label{Pr:TraceSobolev}
Let $A$ be a nonnegative vector in $\re^n$, $D = n + A_1 + \dots + A_n$, $1\leq p<D+1$.
Then the inequality
\begin{equation*}
\begin{split}
	\int_{\re^n_A} & |f(x,0)|^p \left(\log |f(x,0)|^p\right)x^A\,dx  \le \\
	&\frac{D}{p}\log \( C_{p,n+1,A'}^{\frac{(D+1-p)(p-1)}{Dp}}  \int_{\re^n_A\times (0,+\infty)} |\nabla_{x,y} f(x,y)|^p x^A \,dx\,dy \)^{1/p}
\end{split}
\end{equation*}
holds true for any $f \in C_c^{1}(\re^{n+1})$ with $\|f(\cdot ,0)\|_{L^2(\re^n_A,x^A\, dx)}=1$, 
where $C_{p,n+1,A'}$ is defined in \eqref{Cp} (replacing $D$ by $D+1$, and $n$ by $n+1$) and $A'=(A_1,\cdots,A_n,0)$.
\end{proposition}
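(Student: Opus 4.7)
The plan is to combine a weighted Sobolev trace inequality (derived from the bulk inequality \eqref{Sobolev weight} applied in dimension $n+1$) with Jensen's inequality, following the scheme in \cite{FP2013,Park}. First I would establish an auxiliary trace inequality
\[
\|f(\cdot,0)\|_{L^{p_{**}}(\re^n_A,\, x^A dx)} \leq K \, \|\nabla f\|_{L^p(\re^n_A \times (0,\infty),\, x^A dx dy)},
\]
where $p_{**} := Dp/(D+1-p)$ is the exponent forced by the scaling $f_\la(x,y) = f(\la x, \la y)$. To prove it, start from the pointwise identity $|f(x,0)|^{p_{**}} \leq p_{**}\int_0^\infty |f|^{p_{**}-1}|\partial_y f|\, dy$, integrate against $x^A dx$, and apply H\"older's inequality in $(x,y)$ with conjugate exponents $p/(p-1)$ and $p$. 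The exponent $(p_{**}-1)p/(p-1) = (D+1)p/(D+1-p)$ that then appears on the $|f|$-factor is exactly the Sobolev exponent in $\re^{n+1}_{A'}$ (which has weighted dimension $D+1$, since $A_{n+1}' = 0$). Invoking \eqref{Sobolev weight} in $\re^{n+1}_{A'}$ yields $K$ as an explicit fractional power of $C_{p,n+1,A'}$.

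In the second step, under the normalization $\|f(\cdot,0)\|_{L^p(\re^n_A, x^A dx)} = 1$, the measure $d\mu = |f(\cdot,0)|^p x^A dx$ is a probability measure, so Jensen's inequality applied to the concave function $\log$ gives, for any $\alpha > 0$,
\[
\int_{\re^n_A} |f(\cdot,0)|^p \log |f(\cdot,0)|^p x^A dx \leq \tfrac{1}{\alpha} \log \int_{\re^n_A} |f(\cdot,0)|^{p(1+\alpha)} x^A dx.
\]
Choosing $\alpha = (p_{**}-p)/p = (p-1)/(D+1-p)$ calibrates the right-hand side to the $L^{p_{**}}$-norm of the trace, and substituting the trace Sobolev inequality from step one produces a bound of the form $\int |f(\cdot,0)|^p \log |f(\cdot,0)|^p x^A dx \leq c \log[K^{p_{**}} \|\nabla f\|_{L^p}^{p_{**}}]$. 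Simplifying using $p_{**}/p = D/(D+1-p)$ and collecting the powers of $C_{p,n+1,A'}$ should deliver the stated inequality.

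The main obstacle is the bookkeeping of exponents: one must verify that the particular fractional power $(D+1-p)(p-1)/(Dp)$ of $C_{p,n+1,A'}$ in the statement arises correctly from the interlocking factors of the H\"older conjugate $p/(p-1)$, the trace exponent $p_{**}$, and the Jensen factor $1/(p_{**}-p)$. There are no new analytical ideas beyond the extension trick (fundamental theorem of calculus in $y$) and Jensen's inequality; the difficulty lies entirely in careful algebraic accounting.
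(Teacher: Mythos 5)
Your proposal follows essentially the same route as the paper: your auxiliary trace inequality is precisely the paper's Lemma (Sobolev trace inequality with monomial weights), proved there by the same fundamental-theorem-of-calculus-in-$y$ plus H\"older argument with the exponent $(D+1)p/(D+1-p)$ recognized as the Sobolev exponent for the weighted dimension $D+1$, after which the paper likewise concludes by Jensen's inequality under the $L^p$-normalization of the trace. The only detail you gloss over is that to invoke \eqref{Sobolev weight} in $\re^{n+1}_{A'}$ for an integral over the half-space $\re^n_A\times(0,+\infty)$ one must first replace $f$ by its even reflection in the last variable (the paper's $\widetilde{f}$), since the weighted Sobolev inequality is stated on the full space $\re^n_A\times\re$; this costs only a harmless factor of $2$ in the (non-sharp) constant.
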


The proof of Proposition \ref{Pr:TraceSobolev} comes from Jensen's inequality and the following result.

\begin{lemma}{\rm (Sobolev trace inequality with monomial weights)}
\label{Pr:Trace}
Let $A, A', D$ and $C_{p,n+1,A'}$ be as in Proposition \ref{Pr:TraceSobolev} and $q=\frac{Dp}{D+1-p}$.
Then the inequality
\begin{equation}\label{Tr Sob}
\begin{split}
	\( \int_{\re^n_A} |f(x,0)|^{q} x^A \,dx \)^{1/q}&
\le q^{1/q}C_{p,n+1,A'}^{\frac{(D+1-p)(p-1)}{Dp}}
\\
&\( \int_{\re^n_A\times (0,+\infty)} |\nabla_{x,y} f(x,y)|^p x^A \,dx\,dy \)^{1/p}
\end{split}
\end{equation}
holds true for any $f \in C_c^{1}(\re^{n+1})$.
\end{lemma}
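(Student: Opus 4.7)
The plan is to follow the classical Gagliardo-type derivation of a Sobolev trace inequality, adapted to the weighted setting.

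First, I would exploit the fact that $f \in C_c^1(\re^{n+1})$ to write, for each fixed $x \in \re^n_A$,
\[
	|f(x,0)|^q = -\int_0^{\infty} \pd_y |f(x,y)|^q \, dy,
\]
and then estimate $|\pd_y|f|^q| \le q |f|^{q-1} |\nabla_{x,y} f|$. Multiplying this pointwise bound by $x^A$, integrating over $\re^n_A$, and applying H\"older's inequality in the $(x,y)$-variables with conjugate exponents $p$ and $p'$, I would obtain
\[
	\int_{\re^n_A} |f(x,0)|^q x^A\,dx \le q\, I_{\nabla}^{1/p} \( \int_{\re^n_A \times (0,\infty)} |f|^{(q-1)p'} x^A \,dx\,dy \)^{1/p'},
\]
where $I_{\nabla} = \int_{\re^n_A \times (0,\infty)} |\nabla f|^p x^A\,dx\,dy$.

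The key algebraic input is that the specific choice $q = \frac{Dp}{D+1-p}$ makes $(q-1)p' = \frac{(D+1)p}{D+1-p}$, which is precisely the Sobolev critical exponent associated with the weighted Sobolev inequality of Theorem \ref{Theorem:Sobolev} applied with $D$ replaced by $D+1$, $n$ by $n+1$, and $A$ by $A' = (A_1, \dots, A_n, 0)$. To invoke that inequality on the half-space $\re^n_A \times (0,\infty)$ rather than on the full $\re^{n+1}_{A'}$, I would perform the even extension $\tilde f(x,y) = f(x,|y|)$; since $A'_{n+1} = 0$ the weight $x^{A'}$ is symmetric in $y$, so $\tilde f$ lies in the natural weighted space on $\re^{n+1}_{A'}$ and Theorem \ref{Theorem:Sobolev} provides a bound of the second factor by a constant multiple of $C_{p,n+1,A'}\, I_\nabla^{1/p}$.

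Substituting this bound into the H\"older estimate and taking the $q$-th root yields an inequality of the shape \eqref{Tr Sob}, with the power of $C_{p,n+1,A'}$ equal to $p_{\ast}/(q p')$. The main obstacle will be the exponent bookkeeping needed to see that this simplifies to the asserted $\frac{(D+1-p)(p-1)}{Dp}$ and that the overall prefactor reduces cleanly to $q^{1/q}$, in particular absorbing the harmless factor produced by the even reflection. The final density step from $C_c^1(\re^{n+1})$ to the appropriate completion is standard and would be inserted at the end.
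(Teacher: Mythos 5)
Your proposal is correct and follows essentially the same route as the paper: the pointwise bound $|f(x,0)|^q\le q\int_0^\infty |f|^{q-1}|\partial_y f|\,dy$, H\"older's inequality with exponents $p$ and $p'$, the identity $(q-1)p'=\frac{(D+1)p}{D+1-p}$ identifying the critical Sobolev exponent for the data $(n+1,A',D+1)$, and the even reflection in $y$ so that the weighted Sobolev inequality \eqref{Sobolev weight} applies on $\re^n_A\times\re$ with weight $\tilde{x}^{A'}=x^A$. One remark on the bookkeeping you deferred: the power $p_*/(qp')$ works out to $\frac{(D+1)(p-1)}{Dp}$ rather than the $\frac{(D+1-p)(p-1)}{Dp}$ asserted in the statement (the paper's own proof contains the same slip in its bound for the term $I_1$), but since the constant is in any case not sharp this affects only its precise form, not the validity of the argument.
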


By a standard scaling argument one sees that the exponent $q$ is optimal, in the sense that this inequality cannot hold with any exponent $\tilde{q}$ different from $q$.

\begin{proof}[Proof of lemma \ref{Pr:Trace}]
We follow the idea of \cite{Kufner book}. 
We observe that 
$$
	|f(x,0)|^q\leq q \int_0^{+\infty}|f(x,\xi)|^{q-1}\left|\frac{\partial}{\partial \xi}f(x,\xi)\right|d\xi.
$$
The H\"{o}lder inequality yields
\begin{equation*}
\begin{split}
&\left( \int_{\re^n_A} x^A |f(x,0)|^{q} dx \right)^{1/q}\leq q^{1/q}
\left( \int_{\re^n_A} x^A \int_0^{\infty}|f(x,\xi)|^{q-1}\left|\frac{\partial}{\partial \xi}f(x,\xi)\right| d\xi dx \right)^{1/q}
\\
&\leq q^{1/q}
\left( \int_{\re^n_A} x^A \int_0^{\infty}|f(x,\xi)|^{\frac{(D+1)p}{D+1-p}}d\xi dx \right)^{\frac{(D+1-p)(p-1)}{Dp^2}}
\left( \int_{\re^n_A} x^A \int_0^{\infty}\left|\frac{\partial}{\partial \xi}f(x,\xi)\right|^p d\xi dx \right)^{\frac{1}{p}\frac{D+1-p}{pD}}
\\
&:=q^{1/q}I_1 I_2.
\end{split}
\end{equation*}
It is easy to get
$$
	I_2 \leq \( \int_{\re^n_A \times (0, +\infty)} |\nabla_{x,y}f|^p x^A dxdy \)^{\frac{D+1-p}{p^2 D}}.
$$
Let $\widetilde{f}$ be the even extension of $f$ to $\re^{n+1}$. 
Note that if we put $\tilde{x} = (x, \xi) \in \re^{n+1}$, we have $\tilde{x}^{A'} = x^A$.
Thus by the Sobolev inequality \eqref{Sobolev weight} applied for $W^{1,p}_0(\re^n_A \times \re, \tilde{x}^{A'} dx d\xi)$, 
we have
\begin{equation*}
\begin{split}
&I_1
=
\left( \frac{1}{2}\int_{\re^{n}_A\times\re} x^A |\widetilde{f}(x,\xi)|^{\frac{(D+1)p}{D+1-p}}d\xi dx \right)^{\frac{(D+1-p)(p-1)}{Dp^2}}
\\
&\leq
C_{p,n+1,A'}^{\frac{(D+1-p)(p-1)}{Dp}}
\left( \frac{1}{2}\int_{\re^{n}_A\times\re} x^A |\nabla_{x,y}\widetilde{f}(x,\xi)|^{p}d\xi dx \right)^{\frac{1}{p}\frac{(D+1)(p-1)}{Dp}}
\\
&=C_{p,n+1,A'}^{\frac{(D+1-p)(p-1)}{Dp}}
\left( \int_{\re^{n}_A} x^A |\nabla_{x,y}f(x,\xi)|^{p}d\xi dx \right)^{\frac{1}{p}\frac{(D+1)(p-1)}{Dp}}.
\end{split}
\end{equation*}
Combining this with the previous estimate we obtain the assertion.
\end{proof}
The above proof does not give a sharp constant in \eqref{Tr Sob}, so the obtained Logarithmic Sobolev trace inequality with monomial weights is also not sharp.

\subsection{From Trudinger-Moser inequality to Logarithmic Sobolev type inequality}

In this section, we derive a Logarithmic Sobolev-type inequalities from the sharp Trudinger-Moser inequality with monomial weights obtained recently by Lam \cite{Lam}.

\begin{proposition}(Lam \cite{Lam})
\label{Prop:Lam}
Let $\Omega \subset \re^N$ be a bounded domain. 
Then there exists a constant $C_0=C_0(D) > 0$ such that
\begin{equation}
\label{Lam_TM}
	\int_{\Omega} \exp \( \al |u|^{\frac{D}{D-1}} \) x^A dx \le C_0 m(\Omega)
\end{equation}
holds true for any $u \in W^{1,D}_0(\Omega, x^A dx)$ and $\al \le \al_D(A)$, where
$m(\Omega) = \int_{\Omega} x^A dx$ and
\begin{equation}\label{alpaD}
	\al_D(A) = D P(B_A)^{\frac{1}{D-1}} = D \left[ D \frac{\Pi_{i=1}^N \Gamma(\frac{A_i+1}{2})}{2^k\Gamma(1 + \frac{D}{2})} \right]^{\frac{1}{D-1}}.
\end{equation}
\end{proposition}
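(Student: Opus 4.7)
The plan is to adapt Moser's classical proof to the monomial-weighted setting, using the symmetrization framework of Cabr\'e--Ros Oton \cite{Cabre-RosOton}. Three ingredients are involved: (i) a weighted Schwarz symmetrization reducing to radial, nonincreasing functions on $B_R^A := \re^N_A \cap B_R(0)$; (ii) a polar change of variables reducing the radial problem to a one-dimensional inequality on the half-line; (iii) the classical one-dimensional Moser lemma.

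For step (i), I would define the weighted symmetric rearrangement $u^{*}$ of $u$ by requiring $m(\{u^{*}>t\}) = m(\{|u|>t\})$ with super-level sets of the form $B_{r(t)}^A$. From the isoperimetric inequality \eqref{Dis Isop} together with the weighted co-area formula and H\"older on the level sets, one derives the weighted P\'olya--Szeg\H{o} inequality
\[
\int_{\re^N_A}|\nabla u^{*}|^D\,x^A\,dx \le \int_{\re^N_A}|\nabla u|^D\,x^A\,dx,
\]
while both sides of \eqref{Lam_TM} are invariant under rearrangement. Hence it suffices to treat $\Omega=B_R^A$ and $u(x)=v(|x|)$ with $v$ nonincreasing and $v(R)=0$.

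For steps (ii)--(iii), polar decomposition factors out $P(B_1^A) = \int_{\partial B_1^A}\omega^A\,d\sigma_{\omega} = D\,m(B_1^A)$, which is computable from Lemma \ref{Lemma:Gaussian}. Under the substitution $t = D\log(R/r)$, $w(t) = v(r)$, the Dirichlet energy becomes $P(B_1^A)\,D^{D-1}\int_0^\infty|w'(t)|^D dt$ and the integral in \eqref{Lam_TM} becomes $m(B_1^A)\,R^D\int_0^\infty \exp\bigl(\alpha|w|^{D/(D-1)} - t\bigr)\,dt$. Assuming the normalization $\|\nabla u\|_{L^D(x^A dx)}\le 1$ and rescaling $\tilde w := [D^{D-1}P(B_1^A)]^{1/D} w$ to unit $L^D$-norm for the derivative, the exponent becomes $\alpha\,[D^{D-1}P(B_1^A)]^{-1/(D-1)}|\tilde w|^{D/(D-1)} - t$. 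The condition $\alpha \le \alpha_D(A) = D\,P(B_1^A)^{1/(D-1)}$ is precisely what makes this prefactor $\le 1$, and the classical one-dimensional Moser lemma then yields the uniform bound $\int_0^\infty \exp(|\tilde w|^{D/(D-1)}-t)\,dt \le C_D$, which in turn gives \eqref{Lam_TM}.

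The main obstacle is the sharp endpoint $\alpha = \alpha_D(A)$. For any subcritical $\alpha < \alpha_D(A)$ a much easier argument works: expand the exponential into a power series and estimate each $\|u\|_{L^{pD}(x^A dx)}^{pD}$ by iterating the Sobolev inequality \eqref{Sobolev weight}. At the critical value, however, one needs Moser's original delicate series estimate on $\sum_k e^{\phi(|w(t_k)|)-t_k}$, and its use requires the weighted P\'olya--Szeg\H{o} inequality in sharp form. Proving this P\'olya--Szeg\H{o} inequality with care at the coordinate hyperplanes $\{x_i=0\}$ where $x^A$ vanishes or is singular is the most technical point; it relies on approximation by smooth radial functions together with the sharpness of the weighted isoperimetric inequality established in \cite{Cabre-RosOton}.
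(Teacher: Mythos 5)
You should know at the outset that the paper contains \emph{no} proof of this proposition: it is quoted from Lam \cite{Lam}, the authors remarking only that \cite{Cabre-RosOton} proved a similar inequality for sufficiently small $\al$. So there is no internal argument to compare yours against; the comparison is with the cited literature. That said, your sketch is the classical Moser strategy transplanted to the monomial-weight setting, and the quantitative reductions check out: the polar factorization with $P(B_1^A)=D\,m(B_1^A)$; the substitution $t=D\log(R/r)$, which turns the Dirichlet energy into $D^{D-1}P(B_1^A)\int_0^\infty|w'(t)|^D\,dt$ and the exponential integral into $m(\Omega)\int_0^\infty \exp\bigl(\al|w|^{D/(D-1)}-t\bigr)\,dt$; and the observation that $\al\le D\,P(B_1^A)^{1/(D-1)}=\al_D(A)$ is exactly the condition making the renormalized coefficient $\le 1$, so that Moser's one-dimensional lemma applies and yields a constant depending only on $D$. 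You also correctly isolate the one genuinely weighted ingredient --- the P\'olya--Szeg\H{o} inequality with constant $1$ for the rearrangement adapted to the measure $x^A\,dx$ --- and its correct source, the sharp isoperimetric inequality \eqref{Dis Isop} combined with the coarea formula and H\"older's inequality (Talenti's argument). This is essentially how weighted Trudinger--Moser inequalities of this type are proved, so the proposal is sound, modulo the technical care at the hyperplanes $\{x_i=0\}$ that you flag yourself.

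Two caveats. First, as literally stated, \eqref{Lam_TM} cannot hold for all $u\in W^{1,D}_0(\Omega, x^A\,dx)$: replacing $u$ by $\la u$ and letting $\la\to\infty$ makes the left-hand side blow up. The intended hypothesis, which your proof silently and correctly reinstates, is the normalization $\int_\Omega|\nabla u|^D x^A\,dx\le 1$; the paper's statement omits it. Second, your side remark that \emph{every} subcritical $\al<\al_D(A)$ can be handled by expanding the exponential and iterating \eqref{Sobolev weight} is overstated: that argument reaches only sufficiently small $\al$ (exactly what \cite{Cabre-RosOton} obtained), unless one already knows the sharp growth rate $C(q)\sim c\,q^{(D-1)/D}$ of the embedding constants --- which in this paper is \emph{deduced from} \eqref{Lam_TM} itself (Proposition \ref{Prop:Refined_Sobolev}), so that shortcut is either weaker than claimed or circular. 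Neither caveat affects your main argument, which handles the critical exponent directly.
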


A similar result is proved in \cite{Cabre-RosOton} only for sufficiently small $\al$.

Using Proposition \ref{Prop:Lam}, first we obtain an improvement of the Sobolev embedding of
$W^{1,D}_0(\Omega, x^A dx)$ into $L^q(\Omega, x^A dx)$ for any $2 \le q < \infty$.

\begin{proposition}
\label{Prop:Refined_Sobolev}
For any $q \ge 2$, there exists $C(q) > 0$ such that
\begin{equation}
\label{Eq:Refined_Sobolev}
	\| u \|_{L^q(\Omega, x^A dx)} \le C(q) q^{\frac{D-1}{D}} \| \nabla u \|_{L^D(\Omega, x^A dx)}
\end{equation}
holds true for any $u \in W^{1,D}_0(\Omega, x^A dx)$.
Moreover, we have
\begin{equation}\label{lim C}
	\lim_{q \to \infty} C(q) = \left[\frac{D-1}{D\al_D(A)e}\right]^{\frac{D-1}{D}},
\end{equation}
where $\al$ is defined in \eqref{alpaD}.
\end{proposition}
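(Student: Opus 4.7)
The plan is to derive \eqref{Eq:Refined_Sobolev} from the Trudinger--Moser inequality \eqref{Lam_TM} by reading it off term-by-term from the Taylor series of the exponential, which is the standard Judovi\v{c}--Pohozaev mechanism adapted to the monomial-weight setting.

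First I would normalize so that $\|\nabla u\|_{L^D(\Omega, x^A dx)} = 1$. Applying Proposition \ref{Prop:Lam} with $\alpha = \alpha_D(A)$, expanding the exponential, and keeping only the $k$-th summand gives, for every integer $k \ge 1$,
\[
\|u\|_{L^{q_k}(\Omega, x^A dx)} \le \left( \frac{C_0\, m(\Omega)\, k!}{\alpha_D(A)^k} \right)^{1/q_k}, \qquad q_k := \frac{Dk}{D-1}.
\]
Stirling's formula \eqref{Stirling} combined with the identity $k = q_k(D-1)/D$ rewrites the right-hand side as $q_k^{(D-1)/D}\bigl[(D-1)/(D e \alpha_D(A))\bigr]^{(D-1)/D}\,\varepsilon_k$, where the multiplicative remainder $\varepsilon_k$ tends to $1$ as $k \to \infty$. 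This already proves \eqref{Eq:Refined_Sobolev} and the limit \eqref{lim C} along the subsequence $q = q_k$.

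For arbitrary $q \in [2,\infty)$, let $k=k(q)$ be the smallest integer with $q_k \ge q$. H\"older's inequality on the bounded set $\Omega$ yields the monotone interpolation
\[
\|u\|_{L^q(\Omega, x^A dx)} \le m(\Omega)^{1/q - 1/q_k}\, \|u\|_{L^{q_k}(\Omega, x^A dx)},
\]
and since $q_k - q \le D/(D-1)$ is bounded, both $(q_k/q)^{(D-1)/D}$ and $m(\Omega)^{1/q-1/q_k}$ tend to $1$ as $q \to \infty$. Collecting every factor not of the form $q^{(D-1)/D}$ into a single $C(q)$ proves \eqref{Eq:Refined_Sobolev} together with \eqref{lim C}.

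The main obstacle is purely bookkeeping: isolating the leading $q^{(D-1)/D}$ behaviour while simultaneously showing that \emph{each} subleading factor — the Stirling remainder $(2\pi k)^{(D-1)/(2Dk)}$, the prefactor $[C_0 m(\Omega)]^{1/q_k}$, and the H\"older interpolation factor $m(\Omega)^{1/q-1/q_k}$ — converges to $1$ without polluting the universal limit $[(D-1)/(D\alpha_D(A) e)]^{(D-1)/D}$. Note in particular that although $C(q)$ is permitted to depend on $m(\Omega)$, its asymptotic value is universal precisely because $m(\Omega)^{1/q_k} \to 1$ as $q \to \infty$; this is why the integer-$k$ bound above already captures the correct limiting constant even though $m(\Omega)$ appears explicitly in it.
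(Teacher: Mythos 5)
Your proof is correct and rests on the same two pillars as the paper's --- Lam's Trudinger--Moser inequality \eqref{Lam_TM} with $\al=\al_D(A)$ and Stirling's formula \eqref{Stirling} --- but realizes them through a different decomposition. The paper (arguing as in \cite{Ren-Wei(JDE)}, Lemma 2.1) replaces your integer-by-integer extraction with the real-parameter inequality $x^s/\Gamma(s+1)\le e^x$ for all $x\ge 0$, $s\ge 0$, applied pointwise with $s=\frac{D-1}{D}q$ to $x=\al_D(A)\bigl(|u|/\|\nabla u\|_{L^D(\Omega,x^A dx)}\bigr)^{D/(D-1)}$; integrating against $x^A\,dx$ and invoking \eqref{Lam_TM} gives, in one stroke and for every real $q\ge 2$, the closed formula
\[
	C(q)=\left[\Gamma\left(\tfrac{D-1}{D}q+1\right)\right]^{1/q}\left(C_0\, m(\Omega)\right)^{1/q}\al_D(A)^{-\frac{D-1}{D}}\, q^{-\frac{D-1}{D}},
\]
after which Stirling yields \eqref{lim C} directly. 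Your version uses only the integer terms of the Taylor series (the case $s=k\in\N$ of the same inequality, proved by positivity of the remaining terms), and therefore needs the extra H\"older step $\|u\|_{L^q(\Omega,x^A dx)}\le m(\Omega)^{1/q-1/q_k}\|u\|_{L^{q_k}(\Omega,x^A dx)}$ to reach non-integer exponents; your observations that $q_k-q\le D/(D-1)$ stays bounded, hence $(q_k/q)^{(D-1)/D}\to 1$ and $m(\Omega)^{1/q-1/q_k}\to 1$, are exactly what keeps this patch from polluting the universal limit, so the argument is complete. What your route buys is elementarity (factorials only, no Gamma-function bound at non-integer arguments); what the paper's route buys is a single explicit formula for $C(q)$ valid for all $q$ simultaneously, with no interpolation or case distinction.
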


\begin{proof}
We argue as in \cite{Ren-Wei(JDE)} Lemma 2.1.
Let $u \in W^{1,D}_0(\Omega, x^A dx)$.
We recall the following elementary inequality
\begin{equation}\label{88}
\frac{x^s}{\Gamma(s+1)} \le e^x \quad \forall x \ge 0, \forall s \ge 0.
\end{equation}
By \eqref{88} and \eqref{Lam_TM} we obtain
\begin{align*}
	&\frac{1}{\Gamma((\frac{D-1}{D})q+1)} \intO |u|^q x^A dx \\
	&= \frac{1}{\Gamma((\frac{D-1}{D})q+1)}
	\intO \left[ \al_D(A) \( \frac{|u|}{\| \nabla u \|_{L^D(\Omega, x^A dx)}} \)^{\frac{D}{D-1}} \right]^{(\frac{D-1}{D})q} x^A dx \\
	&\times \al_D(A)^{-\frac{D-1}{D}q} \| \nabla u \|_{L^D(\Omega, x^A dx)}^q  \\
	&\!\le \al_D(A)^{-\frac{D-1}{D}q} \| \nabla u \|_{L^D(\Omega, x^A dx)}^q\!\!\intO \exp \left[ \al_D(A) \( \frac{|u|}{\| \nabla u \|_{L^D(\Omega, x^A dx)}} \)^{\frac{D}{D-1}} \right] x^A dx
\\
	&\le C_0 m(\Omega) \al_D(A)^{-\frac{D-1}{D}q} \| \nabla u \|_{L^D(\Omega, x^A dx)}^q.
\end{align*}
Set
\[
	C(q) = \left[ \Gamma(\frac{D-1}{D}q+1) \right]^{1/q} C_0^{1/q}m_A(\Omega)^{1/q} \al_D(A)^{-(\frac{D-1}{D})} q^{-(\frac{D-1}{D})}.
\]
Stirling formula \eqref{Stirling} implies
\[
	\Gamma\( \frac{D-1}{D}q+1 \)^{1/q} \sim \(\frac{D-1}{De}\)^{\frac{D-1}{D}} q^{\frac{D-1}{D}}.
\]
Thus we have \eqref{Eq:Refined_Sobolev} and \eqref{lim C} holds.
\end{proof}

Now, we derive a Logarithmic Sobolev-type inequality from Proposition \ref{Prop:Refined_Sobolev}.

\begin{proposition}
Let $\Omega \subset \re^N$ be a bounded domain and let $q > D$.
Then
\begin{equation}\label{log Sob Omega}
	\int_{\Omega} |u|^D (\log |u|^D) x^A dx \le \frac{Dq}{q-D} \log \( C(q) q^{\frac{D-1}{D}} \( \int_{\Omega} |\nabla u|^D x^A dx \)^{1/D} \)
\end{equation}
holds true for any $u \in W^{1,D}_0(\Omega, x^A dx)$ with $\intO |u|^D x^A dx = 1$.
\end{proposition}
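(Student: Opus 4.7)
The plan is to mimic the strategy already used in the paper for Proposition \ref{Theorem:log-Sobolev 1} and Proposition \ref{Theorem:log-Sobolev p}: combine a Sobolev-type embedding with Jensen's inequality applied to the concave function $\log$. The key new input is the \emph{refined} embedding \eqref{Eq:Refined_Sobolev} obtained in Proposition \ref{Prop:Refined_Sobolev} from Lam's Trudinger--Moser inequality; its explicit $q$-dependent constant $C(q) q^{(D-1)/D}$ is precisely the feature that converts a Sobolev estimate into a Logarithmic Sobolev estimate for every $q > D$.

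First, I would use the normalization $\intO |u|^D x^A dx = 1$ to view $d\mu = |u|^D x^A dx$ as a probability measure and apply Jensen's inequality for the concave function $\log$ to the integrand $|u|^{q-D}$. Since $q > D$, the resulting exponent $(q-D)$ is strictly positive and one may divide by it to rearrange the inequality in the form
\[
	\intO |u|^D \log |u|^D x^A dx \le \frac{D}{q-D} \log \intO |u|^q x^A dx.
\]
I would then raise the refined Sobolev inequality \eqref{Eq:Refined_Sobolev} to the $q$-th power, obtaining
\[
	\intO |u|^q x^A dx \le \left[ C(q) q^{\frac{D-1}{D}} \right]^q \( \intO |\nabla u|^D x^A dx \)^{q/D},
\]
take logarithms, and substitute the resulting upper bound for $\log \intO |u|^q x^A dx$ into the previous display; collecting the prefactor $\frac{Dq}{q-D}$ gives exactly \eqref{log Sob Omega}.

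The argument is essentially routine once Proposition \ref{Prop:Refined_Sobolev} is at our disposal, so there is no genuine obstacle. The only sensitive point is the direction of Jensen's inequality, which is what forces the hypothesis $q > D$ and keeps the prefactor $D/(q-D)$ positive rather than negative. A natural further question would be to optimize the right-hand side of \eqref{log Sob Omega} over $q > D$, using the asymptotic \eqref{lim C} together with Stirling's formula; however, as already remarked by the authors in connection with their $L^p$-Logarithmic Sobolev inequalities, this embedding-plus-Jensen scheme should not be expected to yield the sharp constant in the resulting inequality.
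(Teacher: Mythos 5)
Your proposal is correct and takes essentially the same route as the paper's own proof: Jensen's inequality for the concave function $\log$ with respect to the probability measure $|u|^D x^A\,dx$ (using the normalization $\intO |u|^D x^A dx = 1$ and $q - D > 0$) to bound $\intO |u|^D \log |u|^{q-D}\, x^A dx$ by $\log \intO |u|^q x^A dx$, followed by the refined embedding \eqref{Eq:Refined_Sobolev} raised to the $q$-th power. There is no gap.
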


\begin{proof}
By Jensen's inequality and \eqref{Eq:Refined_Sobolev}, we get
\begin{align*}
	\intO \log (|u|^{q-D} ) |u|^D x^A dx &\le \log \( \intO |u|^{q-D} |u|^D x^A dx \) = \log \( \intO |u|^q x^A dx \) \\
	&\le \log \( C(q)^q q^{\frac{D-1}{D}q} \| \nabla u \|_{L^D(\Omega, x^A dx)}^q \),
\end{align*}
namely \eqref{log Sob Omega}.%
\end{proof}

{\bf Acknowledgment}:
This work has been partially supported by JSPS Grant-in-Aid for Scientific Research (B), No.19136384 (T.F), 
by GNAMPA - INdAM (F.F).


\vspace{1em}\noindent
{\it
Department of Mathematics, Graduate School of Science \& OCAMI, Osaka City University, 3-3-138 Sugimoto, Sumiyoshi-ku, Osaka, 558-8585, Japan}

\vspace{1em}\noindent
e-mail:futoshi@sci.osaka-cu.ac.jp

\vspace{1em}\noindent
{\it
Dipartimento di Ingegneria, Universit\`{a} degli Studi di
Napoli \textquotedblleft Parthenope\textquotedblright, Centro
Direzionale Isola C4 80143 Napoli, Italy}

\vspace{1em}\noindent
 e-mail:
filomena.feo@uniparthenope.it

\end{document}